\newtheorem{thm}{Theorem}[section]
\newtheorem{cor}[thm]{Corollary}
\newtheorem{lem}[thm]{Lemma}
\newtheorem{prop}[thm]{Proposition}
\theoremstyle{remark}
\newtheorem{rem}[thm]{Remark}
\numberwithin{equation}{section}
\newcommand{\al}{\alpha}
\def \b{\beta}
\def\vz{\varepsilon}
\def\Lz{\Lambda}
\def\sz{\sigma}
\def\({\Bigl(}
\def \){ \Bigr)}
\def\x{{\bf x}}
\def\y{{\bf y}}
 \def\sz{{\sigma}}
 \def\RR{{\mathbb R}}
\def\sz{\sigma}
\def\x{{\bf x}}
\def\y{{\bf y}}
\def\a{{\mathbf a}}
 \def\b{{\mathbf b}}
\def\kk{{\bf k}}
\def\Lz{\Lambda}
\def\y{{\bf y}}
\def\Lz{\Lambda}
\begin{document}
\def\RR{\mathbb{R}}
\def\Exp{\text{Exp}}
\def\FF{\mathcal{F}_\al}
\title[] {Average Nikolskii factors for random trigonometric polynomials}

\author[]{Yun Ling} \address{School of Mathematical Sciences, Capital Normal
University, Beijing 10004,
China}
\email{18158616224@163.com}
\author[]{Jiaxin Geng} \address{School of Mathematical Sciences, Capital Normal
University, Beijing 100048,
China}
\email{gengjiaxin1208@163.com}
\author[]{Jiansong Li} \address{School of Mathematical Sciences, Capital Normal
University, Beijing 100048,
China}
\email{cnuljs2023@163.com}
\author[]{Heping Wang} \address{School of Mathematical Sciences, Capital Normal
University, Beijing 100048,
China}
\email{wanghp@cnu.edu.cn}

\keywords{Random trigonometric polynomials; Average Nikolskii
factors} \subjclass[2010]{26D05, 42A05}
\begin{abstract}
For $1\le p,q\le \infty$, the Nikolskii factor for a trigonometric
polynomial $T_{\a}$  is defined by
$$\mathcal N_{p,q}(T_{\a})=\frac{\|T_{\a}\|_{q}}{\|T_{\a}\|_{p}},\ \ T_{\a}(x)=a_{1}+\sum\limits^{n}_{k=1}(a_{2k}\sqrt{2}\cos
kx+a_{2k+1}\sqrt{2}\sin kx).$$ We study this average (expected)
Nikolskii factor for random trigonometric  polynomials with
independent $N(0,\sigma^{2})$ coefficients and obtain that the
exact order. For $1\leq p<q<\infty$, the average Nikolskii factor
is  order degree to the 0, as compared to the degree $1/p-1/q$
worst case bound. We also give the generalization   to random
multivariate  trigonometric polynomials.
\end{abstract}
\maketitle
\input amssym.def

\section{Introduction}

Let $L_p(\Bbb T)\ (1\le p<\infty)$ be the usual Lebesgue space
consisting of measurable functions $f$ with finite norm
$$\|f\|_p=\Big(\frac1{2\pi}\int_{\Bbb T}|f(x)|^pdx\Big)^{1/p},$$
and $L_\infty(\Bbb T)$ be the space $C(\Bbb T)$ of continuous
functions on $\Bbb T$ with  norm $\|f\|_\infty=\max\limits_{x\in
\Bbb T}|f(x)|$, where $\mathbb{T}=[0,2\pi]$ is the torus.

Let  $\mathcal{T}_{n}$ be the  space of all trigonometric
polynomials $T_\a$ of degree $\le  n$, where
$\a=(a_{1},\ldots,a_{2n+1})\in\Bbb R^{2n+1}$,  $T_\a$ is of the
form
\begin{equation}\label{1.1}T_{\a}(x)=a_{1}+\sum\limits^{n}_{k=1}(a_{2k}\sqrt{2}\cos
kx+a_{2k+1}\sqrt{2}\sin kx).\end{equation}
The classical Nikolskii inequality \cite{T} states that there is a constant
$C>0$ such that for any trigonometric polynomial $T_{\a}$ of
degree $\le n$,
\begin{equation}\label{1.2}
\|T_{\a}\|_{q}\leq Cn^{(1/p-1/q)_+}\|T_{\a}\|_{p},
\end{equation} for any $1\leq p, q\leq\infty$, where $a_+=a$ if
$a\ge 0$ and $a_+=0$ if $a<0$. The order $(1/p-1/q)_+$ in the
above inequality \eqref{1.2} is sharp.

\par Alternatively we introduce, for given $p,q$, $1\le p,q\le \infty$,  the Nikolskii
factor for $T_\a$ is defined by
$$\mathcal{N}_{p,q}(T_{\a}):=\frac{\|T_{\a}\|_{q}}{\|T_{\a}\|_{p}}.$$
The worst Nikolskii factor for $\mathcal{T}_n$ is defined by
$$N^{\rm wor}_{p,q}(\mathcal{T}_{n}):=\sup_{0\neq T_{\a}\in\mathcal{T}_{n}}\mathcal{N}_{p,q}
(T_{\a})=\sup_{0\neq
T_{\a}\in\mathcal{T}_{n}}\frac{\|T_{\a}\|_{q}}{\|T_{\a}\|_{p}},$$
with the supremum being taken over all  trigonometric polynomial
of degree $\le n$ whose coefficients are not all zero. Then the
above Nikolskii inequality states
\begin{equation}\label{1.3-0}N^{\rm
wor}_{p,q}(\mathcal{T}_{n})\asymp n^{(1/p-1/q)_{+}},\end{equation}
i.e., that worst case bound for $\mathcal{N}_{p,q}(T_{\a})$ is
$\Theta(n^{(1/p-1/q)_{+}})$. Here we use the notion $A_n\asymp
B_n$ to express $A_n\ll B_n$ and $A_n\gg B_n$, and $A_n\ll B_n$
($A_n\gg B_n$) means that there exists a constant $c>0$
independent of $n$ such that $A_n\leq cB_n$ ($A_n\geq cB_n$).

In this paper, we consider the expected Nikolskii factor for a
random trigonometric polynomial. Given a  polynomial basis
$p_1,\dots, p_N$, a random polynomial $p$ is a polynomial whose
coefficients are random, i.e., $$p=\sum_{k=1}^NX_kp_k,$$where
$X_k,\,1\le k\le N$ are i.i.d random variables. In recent years
the study of random polynomials has attracted much interest, and
there are numerous papers devoted to this field. For example,  for
zeros of random polynomials, see \cite{BLR, CP, DNN, DNV, Pr, SW};
for norms, see \cite{BL, CE, DM, F, G,  NB, SZ}; for average
Markov factors, see \cite{Bos, LWW, PR, WYZ}.

There are also many  papers devoted to studying the Nikolskii type
inequalities, see for example, \cite{AD, D, DT, GT, T}. However,
as far as we know, there are no papers devoted to investigating
the average Nikolskii factors for random polynomials. In this
paper, we consider  random trigonometric polynomials $T_{\a}$
given by \eqref{1.1}, where $\a$ is a Gaussian random vector with
mean 0 and covariance  matrix $\sigma^2 I_N$, $\sz>0$, $I_N$ is
the $N$ by $N$ identity matrix, $N=\dim \mathcal T_n=2n+1$. For
$1\leq p,q\leq\infty$, we discuss the average Nikolskii factor
$$N^{\rm
ave}_{p,q}(\mathcal{T}_{n}):=\mathbb{E}\Big(\frac{\|T_{\a}\|_{q}}{\|T_{\a}\|_{p}}\Big).$$Clearly,
we have $$N^{\rm ave}_{p,q}(\mathcal{T}_{n})\le N^{\rm
wor}_{p,q}(\mathcal{T}_{n}).$$  It turns out that in many cases,
the average Nikolskii factor $N^{\rm ave}_{p,q}(\mathcal{T}_{n})$
 is significantly smaller than the worst Nikolskii factor
$N^{\rm wor}_{p,q}(\mathcal{T}_{n})$. We state our main result as
follows.
\begin{thm}\label{thm1.1}
Let  $1\leq p,q\leq\infty$, $N=2n+1$. Then we have
\begin{equation}\label{1.4-0}
N^{\rm ave}_{p,q}(\mathcal{T}_{n})=\mathbb{E}\Big(\frac{\|T_{\a}\|_{q}}{\|T_{\a}\|_{p}}\Big)\asymp\left\{
\begin{aligned}
 &1,&\ \,1\leq p,q<\infty\ or\ p=q=\infty,\\
 &(\ln N)^{1/2},&\ \,1\leq p<\infty,q=\infty,\\
 &(\ln N)^{-1/2},&\ \,1\leq q<\infty,p=\infty.
\end{aligned}
\right.
\end{equation}
\end{thm}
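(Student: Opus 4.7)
The plan is to reduce to a uniform distribution on the sphere and then analyze the typical sizes of $\|T_\omega\|_p$ for $\omega\in S^{N-1}$. Since the ratio $\|T_\a\|_q/\|T_\a\|_p$ is homogeneous of degree zero in $\a$, writing $\a=r\omega$ with $r=|\a|$ and $\omega=\a/r$, which are independent under the isotropic Gaussian law with $\omega$ uniform on $S^{N-1}$, we obtain
\begin{equation*}
N^{\rm ave}_{p,q}(\mathcal{T}_n) \;=\; \int_{S^{N-1}} \frac{\|T_\omega\|_q}{\|T_\omega\|_p}\, d\sigma(\omega),
\end{equation*}
which in particular does not depend on $\sigma>0$.

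Next I would establish the typical sizes. Writing $T_\omega(x)=\langle\omega,v(x)\rangle$ with $|v(x)|^2=1+2n=N$ (by Parseval), rotational symmetry of the uniform measure gives $T_\omega(x)\stackrel{d}{=}\sqrt{N}\,\omega_1$, so for $1\le p<\infty$,
\begin{equation*}
\int_{S^{N-1}}\|T_\omega\|_p^p\, d\sigma(\omega) \;=\; N^{p/2}\,\mathbb{E}|\omega_1|^p \;=:\; c_{p,N} \;\asymp\; 1.
\end{equation*}
A parallel computation for the second moment $\int\|T_\omega\|_p^{2p}\,d\sigma$ involves the joint law of $(T_\omega(x),T_\omega(y))$, whose correlation $D_n(x-y)/N$ is governed by the Dirichlet kernel $D_n$. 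Using $\|D_n\|_2^2\asymp N$ and smoothness of the function $\rho\mapsto\mathbb{E}|Z_1|^p|Z_2|^p$ (with $(Z_1,Z_2)$ standard bivariate normal of correlation $\rho$) near $\rho=0$, one obtains $\mathrm{Var}(\|T_\omega\|_p^p)\ll 1/N$, and Chebyshev yields $\|T_\omega\|_p\asymp 1$ outside a set of $\sigma$-measure $O(1/N)$. For the $L^\infty$-norm, standard Gaussian-process estimates (chaining or Salem--Zygmund for the upper bound, anti-concentration at an $\asymp N$-point grid for the lower bound) give $\mathbb{E}\|T_\a\|_\infty\asymp\sigma\sqrt{N\ln N}$, hence $\int\|T_\omega\|_\infty\,d\sigma\asymp\sqrt{\ln N}$; Lévy's concentration on $S^{N-1}$ applied to the $\sqrt{N}$-Lipschitz map $\omega\mapsto\|T_\omega\|_\infty$ then gives $\|T_\omega\|_\infty\asymp\sqrt{\ln N}$ outside a set of $\sigma$-measure $\ll N^{-s}$ for any fixed $s>0$.

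With these in hand, split the integrand according to the good event $E$ on which both $\|T_\omega\|_p$ and $\|T_\omega\|_q$ are within constant factors of their typical values. Then $\sigma(E^c)\ll N^{-1}$, and on $E$ the ratio matches the claimed order $1$, $\sqrt{\ln N}$, or $(\ln N)^{-1/2}$ depending on which of $p,q$ is infinite; this produces the correct bound for the contribution over $E$. On $E^c$ the worst-case inequality \eqref{1.3-0} gives integrand $\le C n^{(1/p-1/q)_+}\le CN$ (trivially $\le 1$ when $p=\infty$), so this contribution is $\ll 1$ and negligible in every case. The matching lower bounds follow from the $E$-contribution alone, using $\sigma(E)\ge 1/2$ for $N$ large, after a small further argument for $q=\infty$ to show $\mathbb{E}[\|T_\omega\|_\infty;E]=\mathbb{E}\|T_\omega\|_\infty-o(\sqrt{\ln N})$ via Cauchy--Schwarz on $E^c$.

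The main technical point is the variance bound $\mathrm{Var}(\|T_\omega\|_p^p)\ll 1/N$ for non-even $p$, where $\|T_\omega\|_p^p$ is not polynomial in $\omega$; it relies on the Taylor expansion $\mathbb{E}|Z_1|^p|Z_2|^p=(\mathbb{E}|Z|^p)^2+O(\rho^2)$ near $\rho=0$ combined with $\|D_n\|_2^2\asymp N$, with separate handling of the near-diagonal region $|x-y|\ll 1/N$ where the correlation is not small. A second delicate point is the matching lower bound $\mathbb{E}\|T_\a\|_\infty\gg\sigma\sqrt{N\ln N}$, typically obtained by evaluating $T_\a$ at $\asymp N$ equispaced points with controlled pairwise correlations through the Dirichlet kernel and applying a standard Gaussian-maximum lower bound.
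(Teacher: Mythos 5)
Your proposal is correct in outline, but it takes a genuinely different route from the paper. The paper never uses concentration of measure: it decouples the radial and angular integrals exactly (Theorem \ref{thm2.2}, via Gamma-function identities in polar coordinates), computes $\mathbb{E}\|T_{\a}\|_q^q$ exactly through the Christoffel function $m(x)\equiv N$ (Theorem \ref{thm2.3}, upgraded to all moments by the equivalence of \cite{WZ}), handles a general denominator $\|T_{\a}\|_p$ by the H\"older interpolation $\|T_{\a}\|_2^2\le\|T_{\a}\|_1^{1/2}\|T_{\a}\|_3^{3/2}$ so that only negative moments of $\|T_{\a}\|_2$ and $\|T_{\a}\|_\infty$ are ever needed, and bounds $\mathbb{E}\|T_{\a}\|_\infty^{-r}$ by an explicit integral over the distribution of $\max_k|X_k|$ at the equispaced grid (Theorem \ref{thm3.3}). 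Most notably, all lower bounds in the paper come from the one-line duality $\mathbb{E}(Y)\ge 1/\mathbb{E}(1/Y)$ (Cauchy--Schwarz applied to $1=\mathbb{E}\sqrt{Y\cdot Y^{-1}}$), which converts the $(q,p)$ upper bound into the $(p,q)$ lower bound; you instead re-derive lower bounds from the good-event decomposition. What your route buys is robustness: the good-event/bad-event splitting with the worst-case Nikolskii bound on $E^c$ would survive changes to the coefficient law or basis. What it costs is the two technical inputs you correctly flag --- the covariance expansion $\mathbb{E}|Z_1|^p|Z_2|^p=(\mathbb{E}|Z|^p)^2+O(\rho^2)$ (valid since $|t|^p$ is even, so its first Hermite coefficient vanishes) combined with $\|D_n\|_2^2\asymp N$, and a sphere-versus-Gauss transfer --- none of which the paper needs. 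Both proofs share the same two genuinely hard ingredients: $\mathbb{E}\|T_{\a}\|_\infty\asymp\sqrt{N\ln N}$ (upper bound via Nikolskii with $q=\ln N$ or chaining; lower bound via the $N$ equispaced points, where the vanishing of the Dirichlet kernel makes the values exactly i.i.d.\ $N(0,1)$), and lower-tail control of $\|T_{\a}\|_\infty$. One small overstatement: your claim that $\|T_\omega\|_\infty\asymp\sqrt{\ln N}$ outside a set of measure $\ll N^{-s}$ for \emph{every} $s>0$ is not quite right on the lower-bound side (L\'evy concentration below the median at scale $\epsilon\sqrt{\ln N}$ only gives a rate $N^{-c\epsilon^2}$ tied to the constant you choose), but any fixed polynomial rate suffices for your argument, so this does not affect the conclusion.
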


\begin{rem}
From \eqref{1.3-0} and \eqref{1.4-0} we know that the order of
the worst and average  Nikolskii factors for $1\leq q\leq
p<\infty$ or $p=q=\infty$ are same, which both are equivalent to
the constant $1$. Whereas, for other cases, the orders of average
Nikolskii factors are significantly smaller than the worst case
Nikolskii factors. More precisely, for $1\leq p<q<\infty$, the
worst case Nikolskii factor is order degree to the $1/p-1/q$,
while the average case Nikolskii factors is order degree to the 0;
for $1\leq p<q=\infty$, the worst case Nikolskii factor is order
degree to the $1/p$, while the order of the average Nikolskii
factor is  $(\ln N)^{1/2}$; for $1\le q<p=\infty$, the worst case
Nikolskii factor is order degree to 0, while the order of the
average Nikolskii factor is $(\ln N)^{-1/2}$.

This indicates that for $1\le p<q\le \infty$ or $1\le q<p=\infty$,
the average Nikolskii factor $N^{\rm ave}_{p,q}(\mathcal{T}_{n})$
 is significantly smaller than the worst Nikolskii factor
$N^{\rm wor}_{p,q}(\mathcal{T}_{n})$.
\end{rem}

\begin{rem}Let $\{\vz_i\}_{i=1}^{2n+1}$ be  a sequence  of
independent random variables on some probability space $(\Omega,
A, P)$ taking the values $\pm 1$ with probability $1/2$, that is
symmetric Bernoulli or Rademacher random variables. Let
$$T_{\vz}(x)=\vz_1 a_{1}+\sum\limits^{n}_{k=1}(\vz_{2k}\sqrt{2}\cos
kx+\vz_{2k+1}\sqrt{2}\sin kx)$$be a Rademacher random polynomial.
Then we have for $1\le p,q<\infty$,
$$\mathbb{E}\Big(\frac{\|T_{\vz}\|_{q}}{\|T_{\vz}\|_{p}}\Big)\asymp
1.$$

\end{rem}

We organize this paper as follows. In Section 2, we give the
estimate of average Nikolskii factor for general formulation.
Section 3 contains two subsections. In subsection 3.1, we
determine the exact order of the average Nikolskii factor in the
case of $1\leq q\leq\infty,\, p=2$. In subsection 3.2, we give the
upper bound of $\mathbb{E}(1/\|T_{\a}\|_{\infty})^r$. In Section
4, we prove  Theorem \ref{thm1.1}. Finally, in Section 5, we give
the generalization to random multivariate  trigonometric
polynomials.

\section{General formulation}

Let $\mathbb{S}^{N-1}:=\{\x\in \mathbb{R}^{N}:|\x|_{2}=1\}$ denote
the unit sphere of $\mathbb{R}^{N}$ equipped with the usual
surface  measure $d\sigma$, where $\x\cdot\y$ is the usual
Euclidean  inner product and $|\x|_{2}=(\x\cdot{\x})^{1/2}$ is the
usual Euclidean norm. Suppose that $K\subset\mathbb{R}^{d}$ is
compact and $\mu$ is a Borel probability measure supported on $K$.
We denote by $L_{p}:=L_{p}(K,d\mu),1\leq p<\infty$, the Lebesgue
space on $K$ endowed with the finite norm
$$\|f\|_{p}:=\Big(\int_{K}|f(\x)|^{p}d\mu(\x)\Big)^{1/p},$$
and by $L_{\infty}=C(K)$ the space of continuous functions with
the norm
$$\|f\|_{\infty}:=\max\limits_{\x\in K}|f(\x)|.$$Then, $L_2$ is a
Hilbert space with the inner product
\begin{equation}\label{1.00}\langle f,g\rangle=\int_K
f(\x)g(\x)d\mu(\x).\end{equation}

Suppose that  $\mathcal{V}_{n}$ is the $N$-dimensional linear
subspace of $C(K)$, and
$\{\varphi_{1},\varphi_{2},\dots,\varphi_{N}\}$ is an arbitrary
orthonormal basis for $\mathcal{V}_{n}$ with respect to the inner
product \eqref{1.00}. For $\a=(a_1,\dots,a_N)\in\Bbb R^N$, we
define
$$f_{\a}(\x)=\sum_{j=1}^N a_j\varphi_j(\x).$$

In this paper we study Nikolskii factors. For given $1\leq
p,q\leq\infty$, the $(p,q)$-Nikolskii factor of
$f_{\a}\in\mathcal{V}_{n}$ is defined by
$$\mathcal{N}_{p,q}(f_{\a}):=\frac{\|f_{\a}\|_{q}}{\|f_{\a}\|_{p}}.$$
The worst case $(p,q)$-Nikolskii factor for $\mathcal{V}_{n}$ is
defined by
$$N^{\rm{wor}}_{p,q}(\mathcal{V}_{n}):=\sup_{0\neq f_{\a}\in
\mathcal{V}_{n}}\mathcal{N}_{p,q}(f_{\a})=
\sup_{\a\neq0}\frac{\|f_{\a}\|_{q}}{\|f_{\a}\|_{p}},$$ where the
supremum is taken over all functions $f_{\a}$ of $\mathcal{V}_{n}$
whose coefficients $\a$ are not all zero.

A random  function $f_{\a}\in\mathcal{V}_{n}$ is defined by
$$f_{\a}(\x)=\sum\limits^{N}_{i=1}a_{i}\varphi_{i}(\x),$$
where $\a=(a_{1},a_{2},...,a_{N})\in \mathbb{R}^{N}$, the
coefficients $a_i$ are independent $N(0,\sigma^2)$ random
variables with the common normal density functions $$\frac1
{\sqrt{2\pi}\sz}e^{-\frac{a_i^2}{2\sz^2}}.$$ That is,  $\a\sim
N(0, \sz^2I_N)$ is a $\mathbb{R}^{N}$-valued Gaussian random
vector with mean $0$ and covariance matrix $\sigma^{2}I_{N}$,
where $I_N$ is the $N$ by $N$ identity matrix. Let $\gamma_{N}$
denote the corresponding Gaussian measure on $\mathbb{R}^{N}$
  given by
$$\gamma_{N}(G)=\frac{1}{(2\pi\sigma^{2})^{N/2}}
\int_{G}e^{-\frac{|\x|_{2}^{2}}{2\sigma^{2}}}d\x\ {\rm for\
each\ Borel\ subset}\  G\subset\mathbb{R}^{N}.$$

For given $1\le p,q\le \infty$, the average (expected) Nikolskii
factor for $\mathcal{V}_{n}$ is defined by
\begin{align}\label{1.02}
N^{{\rm ave}}_{p,q}(\mathcal{V}_{n}):=\Bbb
E\frac{\|f_{\a}\|_{q}}{\|f_{\a}\|_{p}}=\int_{\mathbb{R}^{N}}
\frac{\|f_{\a}\|_{q}}{\|f_{\a}\|_{p}}d\gamma_{N}(\a)=\frac{1}{(2\pi\sigma^{2})^{N/2}}
\int_{\Bbb R^N}
\frac{\|f_{\a}\|_{q}}{\|f_{\a}\|_{p}}e^{-\frac{|\a|_{2}^{2}}{2\sigma^{2}}}
d\a.
\end{align}

Note that the average  Nikolskii factor $N^{{\rm
ave}}_{p,q}(\mathcal{V}_{n})$ is independent of the choice of the
orthonormal basis for $\mathcal{V}_{n}$ due to  the rotation
invariance of the Gaussian measure $\gamma_{N}$.

A connection between the average Nikolskii factor $N^{{\rm
ave}}_{p,q}(\mathcal{V}_{n})$ and the worst case Nikolskii factor
$N^{\rm{wor}}_{p,q}(\mathcal{V}_{n})$ can be seen as follows:
\begin{align*}
N^{{\rm ave}}_{p,q}(\mathcal{V}_{n})=\Bbb
E\frac{\|f_{\a}\|_{q}}{\|f_{\a}\|_{p}} \leq\sup_{\a\neq 0}
\frac{\|f_{\a}\|_{q}}{\|f_{\a}\|_{p}}=N^{\rm{wor}}_{p,q}(\mathcal{V}_{n}).
\end{align*}

We have the following proposition.

\begin{prop}
For any $1\leq p,q\leq\infty$, we have
$$N^{\rm{ave}}_{p,q}(\mathcal{V}_{n})=
\mathbb{E}_{\a}\frac{\|f_{\a}\|_{q}}{\|f_{\a}\|_{p}}
=\mathbb{E}_{\b}\frac{\|f_{\b}\|_{q}}{\|f_{\b}\|_{p}},$$
where $\b\sim N(0,I_{N})_{\mathbb{R}^{N}}$ is a
$\mathbb{R}^{N}$-valued Gaussian random vector with mean $0$ and
covariance matrix $I_{N}$.
\end{prop}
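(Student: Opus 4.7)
The plan is to exploit the fact that the Nikolskii factor $\|f_\a\|_q/\|f_\a\|_p$ is homogeneous of degree $0$ in $\a$, combined with the scaling property of the Gaussian measure.

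First I would observe that the map $\a \mapsto f_\a$ is linear, since $f_\a(\x) = \sum_{j=1}^N a_j \varphi_j(\x)$. Consequently, for any scalar $t > 0$, one has $f_{t\a} = t f_\a$, and thus $\|f_{t\a}\|_r = t\|f_\a\|_r$ for every $1 \le r \le \infty$. Taking the ratio for $r = p$ and $r = q$, the factor $t$ cancels, giving
\begin{equation*}
\frac{\|f_{t\a}\|_q}{\|f_{t\a}\|_p} = \frac{\|f_\a\|_q}{\|f_\a\|_p}
\end{equation*}
for all nonzero $\a \in \mathbb{R}^N$ and all $t > 0$.

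Next I would use the scaling property of Gaussian vectors: if $\b \sim N(0, I_N)$, then $\sigma \b \sim N(0, \sigma^2 I_N)$, so $\a$ and $\sigma\b$ have the same distribution. Therefore
\begin{equation*}
N^{\rm ave}_{p,q}(\mathcal V_n) = \mathbb{E}_{\a}\frac{\|f_{\a}\|_{q}}{\|f_{\a}\|_{p}} = \mathbb{E}_{\b}\frac{\|f_{\sigma\b}\|_{q}}{\|f_{\sigma\b}\|_{p}} = \mathbb{E}_{\b}\frac{\|f_{\b}\|_{q}}{\|f_{\b}\|_{p}},
\end{equation*}
where the last equality applies the degree-$0$ homogeneity with $t = \sigma$.

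There is no real obstacle here; the only minor point worth noting is that $\b = 0$ occurs with probability zero under the standard Gaussian measure on $\mathbb{R}^N$, so the ratio $\|f_\b\|_q/\|f_\b\|_p$ is almost surely well-defined (the basis $\{\varphi_j\}$ being linearly independent ensures $f_\b \ne 0$ a.s.), and the change of variables above is legitimate on the full-measure set where the integrand makes sense.
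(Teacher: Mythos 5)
Your proof is correct and is essentially the same argument as the paper's: the paper performs the change of variables $\a=\sigma\b$ in the Gaussian integral, implicitly using the degree-$0$ homogeneity of the ratio $\|f_{\a}\|_q/\|f_{\a}\|_p$ that you state explicitly. Your remark that the ratio is a.s.\ well-defined is a small point the paper leaves tacit, but there is no substantive difference in approach.
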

\begin{proof}
Let $\a=\sigma\b$.  Due to the fact  $\a\sim N(0,\sz I_N)$, we
have $\b\sim N(0,I_N)$, and
\begin{align*}
\mathbb{E}_{\a}\frac{\|f_{\a}\|_{q}}{\|f_{\a}\|_{p}}&=
\frac{1}{(2\pi\sigma^{2})^{N/2}} \int_{\mathbb{R}^{N}}
\frac{\|f_{\a}\|_{q}}{\|f_{\a}\|_{p}}e^{-\frac{|\a|_{2}^{2}}{2\sigma^{2}}}d\a
\\&=\frac{1}{(2\pi\sigma^{2})^{N/2}}
\int_{\mathbb{R}^{N}}\frac{\|f_{\sigma\b}\|_{q}}{\|f_{\sigma\b}\|_{p}}
e^{-\frac{|\sigma\b|_{2}^{2}}{2\sigma^{2}}}d(\sigma\b)\\
&=\frac{1}{(2\pi)^{N/2}}\int_{\mathbb{R}^{N}}
\frac{\|f_{\b}\|_{q}}{\|f_{\b}\|_{p}}e^{-\frac{|\b|_{2}^{2}}2}d\b
\\&=\mathbb{E}_{\b}\frac{\|f_{\b}\|_{q}}{\|f_{\b}\|_{p}}.
\end{align*}
This completes the proof.
\end{proof}

In the sequel, by Proposition 2.1 we always assume that $\sz=1$
and $\a\sim N(0,
 I_N)$ without loss of generality.

\begin{thm}\label{thm2.2}
For any $1\leq p,q\leq\infty,\,\,k,l\in\mathbb{N}$, we have for
$l<k+N$,
\begin{equation}\label{2.3}
\mathbb{E}\frac{\|f_{\a}\|^{k}_{q}}{\|f_{\a}\|^{l}_{2}}\asymp
N^{-l/2}\mathbb{E}\|f_{\a}\|^{k}_{q},
\end{equation}
and for $l<N$,
\begin{equation}\label{2.4}
\mathbb{E}\frac{\|f_{\a}\|^{k}_{2}}{\|f_{\a}\|^{l}_{p}}\asymp
N^{k/2}\mathbb{E}\frac{1}{\|f_{\a}\|^{l}_{p}}.
\end{equation}
In particular, we have
\begin{equation}\label{2.5}
\mathbb{E}\frac{\|f_{\a}\|_{q}}{\|f_{\a}\|_{2}}\asymp
N^{-1/2}\mathbb{E}\|f_{\a}\|_{q}.
\end{equation}
\end{thm}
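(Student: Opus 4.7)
The plan is to polar-decompose the Gaussian vector $\a$ and exploit the orthonormality of $\{\vi_i\}$, which gives $\|f_\a\|_2 = |\a|_2$.

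Write $\a = r\xi$ where $r := |\a|_2$ and $\xi := \a/|\a|_2 \in \mathbb{S}^{N-1}$. Since $\a \sim N(0, I_N)$, it is standard that $r$ follows a chi distribution with $N$ degrees of freedom, $\xi$ is uniformly distributed on $\mathbb{S}^{N-1}$ with respect to $d\sigma$, and $r$ and $\xi$ are independent. Moreover, $f_\a = \sum a_i \vi_i$ is linear in $\a$, so $f_{r\xi} = r f_\xi$, hence $\|f_\a\|_p = r \|f_\xi\|_p$ for every $1 \le p \le \infty$. In particular, by orthonormality $\|f_\a\|_2 = |\a|_2 = r$, which is the identity driving everything.

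For the first claim I would substitute into the relevant expectations. On one hand
\begin{equation*}
\mathbb{E}\frac{\|f_\a\|_q^k}{\|f_\a\|_2^l} = \mathbb{E}\bigl[r^{k-l}\bigr]\cdot \mathbb{E}_\xi\|f_\xi\|_q^k,
\end{equation*}
using independence of $r$ and $\xi$; on the other,
\begin{equation*}
\mathbb{E}\|f_\a\|_q^k = \mathbb{E}\bigl[r^k\bigr]\cdot \mathbb{E}_\xi\|f_\xi\|_q^k.
\end{equation*}
Taking the ratio cancels the spherical factor, so \eqref{2.3} reduces to showing $\mathbb{E}[r^{k-l}]/\mathbb{E}[r^k] \asymp N^{-l/2}$. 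The exact formula $\mathbb{E}[r^m] = 2^{m/2}\Gamma((N+m)/2)/\Gamma(N/2)$ is valid whenever $m > -N$ (which is where the hypothesis $l < k+N$ enters, to keep $\mathbb{E}[r^{k-l}]$ finite), and the ratio simplifies to
\begin{equation*}
\frac{\mathbb{E}[r^{k-l}]}{\mathbb{E}[r^k]} = 2^{-l/2}\,\frac{\Gamma((N+k-l)/2)}{\Gamma((N+k)/2)}.
\end{equation*}
The classical asymptotic $\Gamma(x+a)/\Gamma(x) = x^a(1+O(1/x))$ with $x = (N+k)/2$ then gives the $N^{-l/2}$ order. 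The analogous argument for \eqref{2.4} uses the same polar decomposition: both $\mathbb{E}\|f_\a\|_2^k/\|f_\a\|_p^l$ and $\mathbb{E}\|f_\a\|_p^{-l}$ factor as a chi moment times $\mathbb{E}_\xi\|f_\xi\|_p^{-l}$, reducing the claim to $\mathbb{E}[r^{k-l}]/\mathbb{E}[r^{-l}] \asymp N^{k/2}$; the condition $l < N$ is exactly what makes $\mathbb{E}[r^{-l}]$ finite, and again Stirling's ratio for the Gamma function delivers the desired order. Finally, \eqref{2.5} is the special case $k=l=1$ of \eqref{2.3}.

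The main obstacle is simply confirming the two-sided constants in the Gamma function asymptotic are uniform in $N$ for fixed $k, l$; this is routine but is the only place where one must be careful, since the constants implicit in $\asymp$ are allowed to depend on $k$ and $l$ but not on $N$. Everything else is a clean factorization into independent radial and spherical pieces.
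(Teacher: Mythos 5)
Your proposal is correct and follows essentially the same route as the paper: the paper also passes to polar coordinates $\a = r\xi$, uses $\|f_\a\|_2 = |\a|_2$ to separate the radial integral from the spherical average, evaluates the radial moments as ratios of Gamma functions, and applies Stirling's formula to obtain the $N^{-l/2}$ and $N^{k/2}$ orders. The finiteness conditions $l<k+N$ and $l<N$ enter in exactly the way you describe.
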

\begin{proof}
Using Parseval's identity $\|f_{\a}\|_2=|\a|_2$ and
$\a=r\xi,r=|\a|,\xi\in\mathbb{S}^{N-1}$, we have for $l<N+k$,
\begin{align*}
\mathbb{E}\frac{\|f_{\a}\|^{k}_{q}}{\|f_{\a}\|^{l}_{2}}&
=\frac{1}{(2\pi)^{N/2}}\int_{\mathbb{R}^{N}}
\frac{\|f_{\a}\|^{k}_{q}}{\|f_{\a}\|^{l}_{2}}e^{-\frac{|\a|_{2}^{2}}{2}}d\a\\
&=\frac{1}{(2\pi)^{N/2}}\int_{\mathbb{R}^{N}}
\frac{\|f_{\a}\|^{k}_{q}}{|\a|_{2}^{l}}e^{-\frac{|\a|_{2}^{2}}{2}}d\a\\
&=\frac{1}{(2\pi)^{N/2}}\int^{\infty}_{0}e^{-\frac{r^{2}}{2}}
r^{k-l+N-1}dr
\int_{\mathbb{S}^{N-1}}\|f_{\xi}\|^{k}_{q}d\sigma(\xi)\\
&=\frac{(\sqrt{2})^{k-l-2}}{\pi^{N/2}}\Gamma\left(\frac{k-l+N}{2}\right)
\int_{\mathbb{S}^{N-1}}\|f_{\xi}\|^{k}_{q}d\sigma(\xi),
\end{align*}
and
\begin{align*}
\mathbb{E}\|f_{\a}\|^{k}_{q}&=\frac{1}{(2\pi)^{N/2}}
\int_{\mathbb{R}^{N}}\|f_{\a}\|^{k}_{q}e^{-\frac{|\a|_{2}^{2}}{2}}d\a\\
&=\frac{1}{(2\pi)^{N/2}}\int_{\mathbb{S}^{N-1}}
\|f_{\xi}\|^{k}_{q}d\sigma(\xi)
\int^{\infty}_{0}e^{-\frac{r^{2}}{2}}r^{k+N-1}dr\\
&=\frac{(\sqrt{2})^{k-2}}{\pi^{N/2}}\Gamma\left(\frac{k+N}{2}\right)
\int_{\mathbb{S}^{N-1}}\|f_{\xi}\|^{k}_{q}d\sigma(\xi).
\end{align*}
It follows that
\begin{equation}\label{2.6}\mathbb{E}\frac{\|f_{\a}\|^{k}_{q}}{\|f_{\a}\|^{l}_{2}}=
(\sqrt{2})^{-l}\Gamma\left(\frac{k-l+N}{2}\right)\Big/
\Gamma\left(\frac{k+N}{2}\right)
\mathbb{E}\|f_{\a}\|^{k}_{q}.\end{equation} By Stiring's formula
(see \cite[p. 18]{AAR}), we have the asymptotic estimate
\begin{equation}\label{2.7}
\lim\limits_{x\rightarrow+\infty}\frac{\Gamma(x+1)
e^{x}}{\sqrt{2\pi}x^{x+\frac{1}{2}}}=1,
\end{equation}
from which it follows that
$$
\Gamma\left(\frac{k-l+N}{2}\right)\Big/
\Gamma\left(\frac{k+N}{2}\right) \asymp N^{-l/2}.
$$
By \eqref{2.6} we obtain \eqref{2.3}.

Similarly,  we have for $l<N+k$,
\begin{align*}
\mathbb{E}\frac{\|f_{\a}\|^{k}_{2}}{\|f_{\a}\|^{l}_{p}}
 &=\frac{1}{(2\pi)^{N/2}}\int_{\mathbb{R}^{N}}
\frac{|\a|_{2}^{k}}{\|f_{\a}\|_{p}^{l}}e^{-\frac{|\a|_{2}^{2}}{2}}d\a\\
&=\frac{1}{(2\pi)^{N/2}}\int^{\infty}_{0}
e^{-\frac{r^{2}}{2}}r^{k-l+N-1}dr
\int_{\mathbb{S}^{N-1}}\frac{1}{\|f_{\xi}\|^{l}_{p}}d\sigma(\xi)\\
&=\frac{(\sqrt{2})^{k-l-2}}{\pi^{N/2}}\Gamma\left(\frac{k-l+N}{2}\right)
\int_{\mathbb{S}^{N-1}}\frac{1}{\|f_{\xi}\|^{l}_{p}}d\sigma(\xi),
\end{align*}and for $l<N$,
\begin{align*}
\mathbb{E}\frac{1}{\|f_{\a}\|^{l}_{p}}
&=\frac{1}{(2\pi)^{N/2}}\int_{\mathbb{S}^{N-1}}
\frac{1}{\|f_{\xi}\|^{l}_{p}}d\sigma(\xi)
\int^{\infty}_{0}e^{-\frac{r^{2}}{2}}r^{-l+N-1}dr\\
&=\frac{(\sqrt{2})^{-l-2}}{\pi^{N/2}}\Gamma\left(\frac{N-l}{2}\right)
\int_{\mathbb{S}^{N-1}}\frac{1}{\|f_{\xi}\|^{l}_{p}}d\sigma(\xi).
\end{align*}
It follows that for $l<N$,
$$\mathbb{E}\frac{\|f_{\a}\|^{k}_{2}}{\|f_{\a}\|^{l}_{p}}=
(\sqrt{2})^{k}\Gamma\left(\frac{k-l+N}{2}\right)\Big/
\Gamma\left(\frac{N-l}{2}\right)
\mathbb{E}\frac1{\|f_{\a}\|^{l}_{p}}.$$ By \eqref{2.7}
we get
$$\mathbb{E}\frac{\|f_{\a}\|^{k}_{2}}{\|f_{\a}\|^{l}_{p}}\asymp N^{k/2} \mathbb{E}\frac1{\|f_{\a}\|^{l}_{p}}.$$
This completes the proof of Theorem \ref{thm2.2}.
\end{proof}

Next, we estimate $\mathbb{E}\|f_{\a}\|_{q}^q$. This quantity is
intimately related to the Christoffel function for $\mathcal V_n$.
We recall that the Christoffel function for $\mathcal V_n$ is
defined by
$$\Lz(\x)=\inf_{p(\x)=1,\,p\in\mathcal V_n}\|p\|_2^2.$$It follows
from \cite[Theorem 3.5.6]{DX} that
$$\Lz(\x)=\frac1{\sum_{k=1}^N\varphi_k^2(\x)}.$$
We have the following  result.

\begin{thm}\label{thm2.3}
For any $1\leq q<\infty$. We have
\begin{equation}\label{2.8}
\mathbb{E}\|f_{\a}\|^{q}_{q}=C(q)^{q}\Big\|\Big(\sum\limits^{N}_{i=1}
|\varphi_{i}|^{2}\Big)^{1/2}\Big\|_{q}^q\ ,
\end{equation}
where $\a\sim N(0,I_N)$, and $$C(q)=(\Bbb
E|a_1|^q)^{1/q}=\Big(\frac1{\sqrt{2\pi}}\int_{\Bbb
R}|t|^qe^{-t^2/2}dt\Big)^{1/q}=\pi^{-\frac{1}{2q}}
2^{\frac{1}{2}}\Gamma^{1/q}\Big(\frac{q+1}{2}\Big).$$
\end{thm}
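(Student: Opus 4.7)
The plan is to compute $\mathbb{E}\|f_\a\|_q^q$ pointwise in $\x$ by swapping the expectation with the integral over $K$, and then exploiting the fact that $f_\a(\x)$ is, for each fixed $\x$, a one-dimensional Gaussian random variable whose variance is exactly the reproducing-kernel diagonal $\sum_{i=1}^N \varphi_i(\x)^2$.

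First I would apply Fubini's theorem (justified by the nonnegativity of $|f_\a(\x)|^q$) to write
\begin{align*}
\mathbb{E}\|f_\a\|_q^q = \mathbb{E}\int_K |f_\a(\x)|^q\, d\mu(\x) = \int_K \mathbb{E}|f_\a(\x)|^q\, d\mu(\x).
\end{align*}
Next, for each fixed $\x\in K$, observe that $f_\a(\x)=\sum_{i=1}^N a_i\varphi_i(\x)$ is a linear combination of the i.i.d.\ $N(0,1)$ coordinates $a_i$, hence is itself a centered Gaussian variable with variance $S(\x):=\sum_{i=1}^N\varphi_i(\x)^2$. Consequently $f_\a(\x)\stackrel{d}{=}\sqrt{S(\x)}\,Z$ for a standard Gaussian $Z$, and
\begin{align*}
\mathbb{E}|f_\a(\x)|^q = S(\x)^{q/2}\,\mathbb{E}|Z|^q = C(q)^q\, S(\x)^{q/2}.
\end{align*}
Substituting back and recognizing the right-hand side as $\|S^{1/2}\|_q^q$ yields the identity \eqref{2.8}.

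The only remaining task is to identify the constant $C(q)$ explicitly. Starting from $C(q)^q=\frac{1}{\sqrt{2\pi}}\int_\RR |t|^q e^{-t^2/2}\,dt = \frac{2}{\sqrt{2\pi}}\int_0^\infty t^q e^{-t^2/2}\,dt$, the substitution $u=t^2/2$ converts this integral into a Gamma function, giving
\begin{align*}
C(q)^q = \frac{2^{(q+1)/2}}{\sqrt{2\pi}}\,\Gamma\!\left(\frac{q+1}{2}\right) = \pi^{-1/2}\,2^{q/2}\,\Gamma\!\left(\frac{q+1}{2}\right),
\end{align*}
from which the stated formula $C(q)=\pi^{-1/(2q)}\,2^{1/2}\,\Gamma^{1/q}\!\left(\frac{q+1}{2}\right)$ follows by taking the $q$-th root.

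There is no real obstacle here: the proof is essentially a one-line Fubini computation combined with the rotation/scaling invariance of the Gaussian, so the ``main step'' is simply noticing that the variance of the one-dimensional marginal $f_\a(\x)$ is exactly the Christoffel-kernel diagonal $\sum_i\varphi_i^2(\x)$. This observation is what makes the quantity $\mathbb{E}\|f_\a\|_q^q$ tractable and links it to the Christoffel function $\Lambda(\x)=1/S(\x)$ mentioned just before the theorem; it will presumably be combined with Theorem \ref{thm2.2} in later sections to reduce the average Nikolskii factor to an estimate involving $\int_K S(\x)^{q/2}\,d\mu(\x)$.
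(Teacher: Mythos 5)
Your proof is correct and follows essentially the same route as the paper: Fubini's theorem followed by the observation that, for fixed $\x$, the marginal $f_{\a}(\x)$ is a centered Gaussian with variance $\sum_{i=1}^N\varphi_i(\x)^2$, so that $\mathbb{E}|f_{\a}(\x)|^q=C(q)^q\big(\sum_i\varphi_i(\x)^2\big)^{q/2}$. Your explicit Gamma-function evaluation of $C(q)$ is also correct (the paper simply states it without derivation).
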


\begin{proof}

For $1\leq q<\infty$, by the Fubini theorem, we have
\allowdisplaybreaks
\begin{align}\label{2.9}
\mathbb{E}\|f_{\a}\|^{q}_{q}&
=\mathbb{E}\int_{K}|f_{\a}(\x)|^{q}d\mu(\x)\nonumber\\
&=\int_{K}|m(\x)|^{q/2}
\mathbb{E}\Big|\frac{f_{\a}(\x)}{\sqrt{m(\x)}}\Big|^{q}d\mu(\x),
\end{align}where
$$m(\x):=\frac1{\Lz(\x)}=\sum\limits^{N}_{k=1}|\varphi_{k}(\x)|^{2}.$$
Since $\a\sim N(0,I_N)$, we get for arbitrary fixed $\x\in K$,
$$
\eta(\x):=\frac{1}{\sqrt{m(\x)}}f_{\a}(\x)\sim N(0,1),
$$and $$\Bbb E |\eta(\x)|^q=\Bbb
E|a_1|^q=C(q)^q=\frac1{\sqrt{2\pi}}\int_{\Bbb
R}|t|^qe^{-t^2/2}dt=\pi^{-\frac{1}{2}}2^{\frac{q}{2}}\Gamma\Big(\frac{q+1}{2}\Big),$$which
is independent of $\x\in K$. It follows from \eqref{2.9} that
\begin{align*}\label{Ex}
\mathbb{E}\|f_{\a}\|^{q}_{q}&=\int_{K}|m(\x)|^{q}\mathbb{E}|\eta(\x)|^qd\mu(\x)\notag
\\&=C(q)^{q}\int_{K}|m(\x)|^{q/2}d\mu(\x)\notag\\& = C(q)^{q}
\Big\|\Big(\sum\limits^{N}_{k=1}|\varphi_{i}|^{2}
\Big)^{1/2}\Big\|^{q}_{q}\,.
\end{align*}
This completes the proof of Theorem \ref{thm2.3}.
\end{proof}

\begin{cor}\label{Cor2.4}
For any $1\leq s, q<\infty$, we have
\begin{equation}\label{2.10}\mathbb{E}\|f_{\a}\|^{s}_{q}\asymp
\Big\|\Big(\sum\limits^{N}_{i=1}
|\varphi_{i}|^{2}\Big)^{1/2}\Big\|^{s}_{q}.\end{equation}
\end{cor}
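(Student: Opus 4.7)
The plan is to reduce the general case $s\in[1,\infty)$ to the case $s=q$, which follows immediately from Theorem \ref{thm2.3}. First I would observe that, with $\a\sim N(0,I_N)$, the random function $f_\a=\sum_{i=1}^N a_i\varphi_i$ is a Gaussian random element of the separable Banach space $B=L_q(K,d\mu)$, since it is a finite linear combination of fixed vectors $\varphi_i\in B$ with i.i.d.\ standard normal coefficients.

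The key tool is then the Kahane--Khintchine inequality for Banach-space valued Gaussian random variables: for any $B$-valued Gaussian random variable $X$ and any $0<s,r<\infty$,
$$(\mathbb{E}\|X\|_B^s)^{1/s}\asymp (\mathbb{E}\|X\|_B^r)^{1/r},$$
with constants depending only on $s$ and $r$, and not on $B$ or on the distribution of $X$. Applying this with $X=f_\a$, $B=L_q$ and $r=q$ gives
$$\mathbb{E}\|f_\a\|_q^s\asymp \bigl(\mathbb{E}\|f_\a\|_q^q\bigr)^{s/q},$$
and inserting the explicit evaluation supplied by Theorem \ref{thm2.3} yields \eqref{2.10}.

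No real obstacle is expected here. The hypotheses of Kahane--Khintchine are trivially satisfied in this finite-dimensional Gaussian setting, and the implicit constants depend only on $s$ and $q$, not on $N$ or on the orthonormal basis $\{\varphi_i\}$. If one preferred a self-contained argument, one could instead rewrite both sides via the polar-coordinate formula already employed in the proof of Theorem \ref{thm2.2} as a $\Gamma$-factor multiple of $\int_{\mathbb{S}^{N-1}}\|f_\xi\|_q^s\,d\sigma(\xi)$ and then derive the required two-sided comparison of these spherical integrals as $s$ varies; one direction follows from Jensen on $\mathbb{S}^{N-1}$, and the reverse direction, which would be the main work, can be obtained from Gaussian concentration.
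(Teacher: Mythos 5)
Your argument is essentially the paper's own proof: both reduce the $s$-th moment to the $q$-th moment via the equivalence $(\mathbb{E}\|f_{\a}\|_{q}^{s})^{1/s}\asymp(\mathbb{E}\|f_{\a}\|_{q}^{q})^{1/q}$ for Gaussian random elements of $L_q$ (the paper cites \cite[Theorem 1]{WZ} for this, you invoke the Gaussian Kahane--Khintchine inequality, which is the same fact) and then apply Theorem \ref{thm2.3}. The proposal is correct.
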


\begin{proof} It follows from \cite[Theorem 1]{WZ} that
$$\left(\mathbb{E}\|f_{\a}\|^{s}_{q}\right)^{1/s}\asymp
\left(\mathbb{E}\|f_{\a}\|^{q}_{q}\right)^{1/q}.$$According to
Theorem \ref{thm2.3}, we get the required result \eqref{2.10}.
\end{proof}

\begin{rem}For $1\le s, q<\infty$ and  a Rademacher random function, we have the Kahane-Khintchine
inequality (see \cite{LT})
$$\mathbb{E}\Big(\big\|\sum\limits^{N}_{i=1}\vz_{i}\varphi_{i}\big\|^s_q\Big)\asymp
\Big\|\Big(\sum\limits^{N}_{i=1}
|\varphi_{i}|^{2}\Big)^{1/2}\Big\|^{s}_{q}. $$\end{rem}

\section{Average Nikolskii factor for $\mathcal{T}_{n}$}

In this and next sections, we always suppose that
$$
T_{\a}(x)=a_1+\sum_{k=1}^n(a_{2k}\sqrt2\cos kx+a_{2k+1}\sqrt 2\sin
kx),
$$where $\a\sim N(0,I_N)$. We note that $\{1, \sqrt2 \cos x,
\sqrt2\sin x,\dots, \sqrt2 \cos nx, \sqrt2\sin nx\}$ is an
orthonormal basis for $\mathcal T_n$.

\subsection{Estimates of $N^{\rm{ave}}_{2,q}(\mathcal{T}_{n})$}

\begin{thm}\label{thm3.1}
Let $p=2, \,1\leq q\leq\infty$. We have
\begin{equation}\label{3.1}N^{\rm{ave}}_{2,q}(\mathcal{T}_{n})=
\mathbb{E}\Big(\frac{\|T_{\a}\|_{q}}{\|T_{\a}\|_{2}}\Big)\asymp
N^{-1/2}\Bbb E\|T_{\a}\|_q\asymp\left\{
\begin{aligned}
&1,&\ \,1\leq q<\infty,\\
&\sqrt{\ln N},&\ \,q=\infty,
\end{aligned}
\right.\end{equation} where $N=2n+1$.
\end{thm}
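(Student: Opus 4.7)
The plan is to reduce everything to estimates on $\mathbb{E}\|T_{\a}\|_q$ by invoking the general machinery of Section 2, then exploit the special structure of the trigonometric basis.

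First I would apply Theorem \ref{thm2.2}, specifically equation \eqref{2.5}, which gives immediately
$$
N^{\rm ave}_{2,q}(\mathcal{T}_n)=\mathbb{E}\frac{\|T_{\a}\|_q}{\|T_{\a}\|_2}\asymp N^{-1/2}\mathbb{E}\|T_{\a}\|_q.
$$
This reduces the problem to computing $\mathbb{E}\|T_{\a}\|_q$ for $1\le q\le\infty$. The orthonormal basis of $\mathcal{T}_n$ is $\varphi_1=1$, $\varphi_{2k}(x)=\sqrt{2}\cos kx$, $\varphi_{2k+1}(x)=\sqrt{2}\sin kx$ for $1\le k\le n$, so a direct computation using $\cos^2 kx+\sin^2 kx=1$ yields
$$
\sum_{i=1}^{N}|\varphi_i(x)|^2=1+2\sum_{k=1}^{n}(\cos^2 kx+\sin^2 kx)=1+2n=N,
$$
a constant function on $\mathbb{T}$. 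This is the crucial structural fact.

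For $1\le q<\infty$, I would simply feed this into Corollary \ref{Cor2.4} with $s=1$: since $\bigl\|\bigl(\sum|\varphi_i|^2\bigr)^{1/2}\bigr\|_q=\sqrt{N}$, we get $\mathbb{E}\|T_{\a}\|_q\asymp \sqrt{N}$, hence $N^{\rm ave}_{2,q}(\mathcal{T}_n)\asymp 1$.

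The $q=\infty$ case is the main obstacle and requires showing $\mathbb{E}\|T_{\a}\|_\infty\asymp\sqrt{N\ln N}$. For the \emph{lower bound}, I would evaluate $T_{\a}$ at the $N$ equispaced nodes $x_j=2\pi j/N$, $j=0,1,\dots,N-1$. A direct covariance calculation using the discrete orthogonality relation $\sum_{k=-n}^{n}e^{ik(x_j-x_l)}=N\delta_{jl}$ shows that $T_{\a}(x_0),\dots,T_{\a}(x_{N-1})$ are \emph{independent} $N(0,N)$ random variables. The classical estimate for the maximum of $N$ i.i.d.\ centered Gaussians then gives $\mathbb{E}\max_j|T_{\a}(x_j)|\asymp \sqrt{N\ln N}$, and since $\|T_{\a}\|_\infty\ge \max_j|T_{\a}(x_j)|$, the lower bound follows. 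For the \emph{upper bound}, I would discretize $\mathbb{T}$ at $M\asymp n$ equispaced points $y_j$ and use Bernstein's inequality $\|T_{\a}'\|_\infty\le n\|T_{\a}\|_\infty$ together with the mean value theorem to obtain $\|T_{\a}\|_\infty\le 2\max_j|T_{\a}(y_j)|$ (choosing $M$ a suitable multiple of $n$). Since each $T_{\a}(y_j)$ is Gaussian with variance $N$, a union bound over the $M\asymp N$ points yields $\mathbb{E}\max_j|T_{\a}(y_j)|\lesssim \sqrt{N\ln N}$, completing the matching upper bound. The delicate step is the Bernstein-plus-discretization comparison, since one must confirm that the same deterministic inequality works uniformly in $\a$ before taking expectations; everything downstream is then just standard Gaussian maximum bounds.
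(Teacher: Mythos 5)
Your proposal is correct, and apart from one step it follows the paper's own argument: the reduction via \eqref{2.5}, the observation that $\sum_{i=1}^N|\varphi_i(x)|^2\equiv N$ combined with Corollary \ref{Cor2.4} for $1\le q<\infty$, and the lower bound for $q=\infty$ via the i.i.d.\ Gaussian values $T_{\a}(2\pi k/N)$ at the equispaced nodes (discrete orthogonality of the Dirichlet kernel plus the classical $\mathbb{E}\max_k|X_k|\asymp\sqrt{\ln N}$) are exactly what the paper does. The genuine divergence is the upper bound for $\mathbb{E}\|T_{\a}\|_{\infty}$. The paper obtains it by combining the Nikol'skii inequality $\|T_{\a}\|_\infty\ll N^{1/q}\|T_{\a}\|_q$ with the exact formula of Theorem \ref{thm2.3}, using $C(q)\ll q^{1/2}$ with constants uniform in $q$ and then optimizing at $q=\ln N$; this reuses the Section~2 machinery and transfers verbatim to $\mathbb{T}^d$ in Section~5, where only the dimension-free Nikol'skii constant needs checking. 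You instead discretize at $M\asymp n$ equispaced points, use Bernstein's inequality and the mean value theorem to get the deterministic bound $\|T_{\a}\|_\infty\le 2\max_j|T_{\a}(y_j)|$ (valid uniformly in $\a$ once $M\ge 2\pi n$, say), and then apply the standard expected-maximum bound $\ll\sqrt{N\ln M}$ for $M$ centered Gaussians of variance $N$, which needs no independence. Your route is more elementary and avoids tracking the $q$-dependence of $C(q)$, at the cost of invoking a Marcinkiewicz-type discretization specific to trigonometric polynomials; both yield the same $(N\ln N)^{1/2}$ bound and both generalize to the multivariate case.
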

\begin{proof} We note that $\{1, \sqrt2\cos kx, \sqrt2\sin kx,\ k=1,2,\dots,n\}$ is an
orthonormal basis for $\mathcal T_n$, and $$m(x):=1^2+\sum_{k=1}^n
(2\cos^2 kx+2\sin^2 kx)=2n+1=N.$$ For $1\leq s,q<\infty$,
\eqref{2.10} and (\ref{2.5}) give
$$\mathbb{E}\|T_{\a}\|^{s}_{q}\asymp \|m(x)^{1/2}\|_q^s\asymp N^{s/2},$$
and $$ N^{\rm{ave}}_{2,q}(\mathcal{T}_{n})\asymp N^{-1/2}\Bbb
E\|T_{\a}\|_q\asymp 1.
$$

For $q=\infty$, by the Nikolskii inequality (\ref{1.2}), the
H\"{o}lder inequality,  and Theorem \ref{thm2.3}, we have for
$2<q<\infty$,
\begin{align*}
\mathbb{E}\|T_{\a}\|_{\infty}&\ll N^{1/q}\mathbb{E}\|T_{\a}\|_{q}\leq N^{1/q}(\mathbb{E}\|T_{\a}\|^{q}_{q})^{1/q} \\
&=N^{1/q+1/2}C(q)\ll N^{1/2+1/q}q^{1/2},
\end{align*}
where in the last inequality, we used the fact that $$
C(q)^{1/q}\ll q^{1/2}.$$This is due to
$$\lim\limits_{q\rightarrow+\infty}\frac{C(q)}{q^{1/2}}=
\lim\limits_{q\rightarrow+\infty}\Big(\frac{q-1}{q}\Big)^{1/2}
2^{\frac{1}{2q}}\exp\Big(-\frac{q-1}{2q}\Big)=e^{-1/2}.$$We remark
that the  constants in the above inequalities are independent of
$N$ and $q$. Thus, taking $q=\ln N$, we get
$$\mathbb{E}\|T_{\a}\|_{\infty}\ll (N\ln N)^{1/2}.$$ By
(\ref{2.5}) we obtain
\begin{equation}N^{\rm{ave}}_{2,\infty}(\mathcal{T}_{n})\ll\sqrt{\ln
N}.\label{3.1-0}\end{equation}

 Now we show the lower bound  of
 $N^{\rm{ave}}_{2,\infty}(\mathcal{T}_{n})$. We set
 $$X=(X_1,\dots, X_N),\ \ X_k=\frac1{\sqrt N}T_{\a}(x_k), \ x_k=\frac{2\pi k}{2n+1}, \ 1\le k\le
 N,$$where $T_{\a}$ is the random trigonometric polynomial given by \eqref{1.1},
 $\a\sim N(0,I_N)$. Then $X$ is the Gaussian centered
 random vector with covariance matrix $C=(C_{ij})_{i,j=1}^N$,
 where $C_{ij}=\Bbb E(X_iX_j)$. We note that $$\Bbb E(a_k
 a_l)=\delta_{k,l}=\left\{
\begin{aligned}
 &0,&\ \,l\neq s,\\
 &1,&\ \,l=s.
\end{aligned}
\right., \ \ 1\le k,l\le N,$$and
$$T_{\a}(x_i)=a_1+\sum_{k=1}^n(a_{2k}\sqrt2\cos kx_i+a_{2k+1}\sqrt 2\sin kx_i).$$ It follows that
\begin{align*}\Bbb E(X_iX_j)&=\frac1N\Bbb E(T_{\a}(x_i)T_{\a}(x_j))=\frac1N(1+2\sum_{k=1}^n(\cos x_i\cos x_j+\sin x_i\sin x_j))\\
&=\frac 1N(1+2\sum_{k=1}^n\cos k(x_i-x_j))=\frac
1ND_n(x_i-x_j)=\delta_{i,j},\end{align*}where $D_n$ is the
Dirichlet kernel,
$$D_n(x)=1+2\sum_{k=1}^n\cos kx= \frac{\sin(n+\frac12)x}{\sin \frac
x2}.$$ Hence, $X\sim N(0,I_N)$. This means that $X_1,\dots, X_N$
are the i.i.d. random variables with $X_i\sim N(0,1)$. It follows
from \cite{V, P} that
$$\Bbb E \|T_{\a}\|_\infty\ge \sqrt N \ \Bbb E\max_{1\le k\le
N}|X_k|\asymp \sqrt N\, \sqrt{\ln N}.
$$
By \eqref{2.5} and \eqref{3.1-0} we obtain
$$N^{\rm{ave}}_{2,\infty}(\mathcal{T}_{n})\asymp \sqrt{\ln
N}.$$Theorem \ref{thm3.1} is proved. \end{proof}

\subsection{Upper estimates of $\mathbb{E}(\frac{1}{\|T_{\a}\|^{r}_{\infty}})$}
\begin{lem}(See \cite[p. 2]{B}, \cite{DW, M})\label{lem3.3}
If $X\sim N(0,1)$, then for all $t>0$,
$$\sqrt{2}\pi^{-1/2}(t^{-1}-t^{-3})e^{-\frac{t^{2}}{2}}\leq\mathbb{P}(|X|\geq t)\geq \sqrt{2}\pi^{-1/2}t^{-1}e^{-\frac{t^{2}}{2}}.$$
In particular, if $\delta\in(0,e^{-1}]$, then
$\mathbb{P}(|X|\geq c_{0}\sqrt{\ln \delta^{-1}})>\delta$
for some absolute constant $c_{0}>0$.
\end{lem}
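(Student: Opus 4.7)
The first assertion is the classical Mills ratio two-sided bound (there appears to be a typographical flip in the displayed inequality; the intended statement has $\mathbb P(|X|\ge t)$ sandwiched between the two expressions). The plan is to first reduce to a one-sided tail by symmetry,
\[
\mathbb P(|X|\ge t)=\sqrt{\tfrac{2}{\pi}}\int_{t}^{\infty}e^{-x^{2}/2}\,dx,
\]
and then obtain the upper bound by the elementary estimate $1\le x/t$ for $x\ge t$, which gives
\[
\int_{t}^{\infty}e^{-x^{2}/2}\,dx\le\frac{1}{t}\int_{t}^{\infty}xe^{-x^{2}/2}\,dx=\frac{e^{-t^{2}/2}}{t}.
\]

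For the matching lower bound the plan is to use integration by parts, writing $e^{-x^{2}/2}=\frac{1}{x}\cdot xe^{-x^{2}/2}$ and taking $u=1/x$, $dv=xe^{-x^{2}/2}\,dx$, so that
\[
\int_{t}^{\infty}e^{-x^{2}/2}\,dx=\frac{e^{-t^{2}/2}}{t}-\int_{t}^{\infty}\frac{e^{-x^{2}/2}}{x^{2}}\,dx.
\]
The remainder integral is then handled by the same trick $1/x^{2}\le x/t^{3}$ on $[t,\infty)$, yielding the bound $t^{-3}e^{-t^{2}/2}$ and hence the desired lower estimate $\sqrt{2/\pi}(t^{-1}-t^{-3})e^{-t^{2}/2}$.

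For the second assertion the plan is to plug $t=c_{0}\sqrt{\ln\delta^{-1}}$ into the lower bound. Since $e^{-t^{2}/2}=\delta^{c_{0}^{2}/2}$, the lower bound becomes
\[
\mathbb P(|X|\ge t)\ge\sqrt{\tfrac{2}{\pi}}\,\bigl(t^{-1}-t^{-3}\bigr)\,\delta^{c_{0}^{2}/2}.
\]
Choosing $c_{0}>0$ small enough that $c_{0}^{2}/2<1$ makes the ratio $\delta^{c_{0}^{2}/2}/\delta=\delta^{c_{0}^{2}/2-1}$ blow up as $\delta\to 0^{+}$, which overwhelms the at-worst polynomial decay of the prefactor $t^{-1}-t^{-3}$. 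The constraint $\delta\le e^{-1}$ ensures $\ln\delta^{-1}\ge 1$ and hence $t\ge c_{0}$, which lets us choose $c_{0}$ so that $t^{-1}-t^{-3}$ stays uniformly positive on the relevant range.

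The only mild obstacle is book-keeping in the second part: we must verify that the choice of $c_{0}$ can be made \emph{absolute} (independent of $\delta$) while simultaneously keeping $t>1$ so that the lower Mills bound is positive. This is a routine tuning: fixing any $c_{0}\in(0,1)$ and noting that for $\delta\le e^{-1}$ sufficiently small the factor $\delta^{c_{0}^{2}/2-1}$ dominates any fixed polynomial in $\ln\delta^{-1}$ completes the argument; the remaining finitely many $\delta\in(0,e^{-1}]$ are handled by shrinking $c_{0}$ once more if necessary.
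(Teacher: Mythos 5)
The paper offers no proof of this lemma at all: it is imported from the literature (the citations to Bogachev, Dai--Wang and Maiorov), so there is no in-paper argument to compare against, and your self-contained derivation is therefore already ``more'' than the paper does. Your treatment of the first assertion is the standard Mills-ratio computation and is correct: the symmetry reduction, the upper bound via $1\le x/t$ on $[t,\infty)$, and the lower bound via integration by parts followed by $x^{-2}\le x/t^{3}$ all check out, and you are right that the displayed ``$\ge$'' is a typo for ``$\le$'' (the tail probability is sandwiched between the two expressions).

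For the second assertion the two-regime strategy is the right one, but your tuning discussion contains a genuine unresolved tension rather than mere bookkeeping. The Mills lower bound $\sqrt{2/\pi}\,(t^{-1}-t^{-3})e^{-t^{2}/2}$ is nonpositive when $t\le 1$ and is very weak when $t$ is only slightly above $1$; with $t=c_{0}\sqrt{\ln\delta^{-1}}$ and $\delta$ near $e^{-1}$ one has $t$ near $c_{0}$, so for any $c_{0}\le 1$ the bound fails exactly where you need it, and ``shrinking $c_{0}$ once more'' only makes $t$ smaller, so it cannot restore $t>1$ --- the two requirements you list (small $c_{0}$ and $t>1$) pull in opposite directions. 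Moreover the leftover range is an interval, not ``finitely many $\delta$''. The repair is straightforward but should be stated: (i) fix any $c_{0}\in(0,\sqrt{2})$ and apply the Mills lower bound only for $\delta\le\delta_{0}(c_{0})$ chosen so that $t\ge 2$, where $t^{-1}-t^{-3}\ge\tfrac34 t^{-1}$ and $\delta^{c_{0}^{2}/2-1}$ dominates $\sqrt{\ln\delta^{-1}}$; (ii) on the compact remainder $[\delta_{0},e^{-1}]$ avoid Mills entirely and use that $\ln\delta^{-1}$ is bounded there, so $\mathbb{P}\bigl(|X|\ge c_{0}'\sqrt{\ln\delta^{-1}}\bigr)\to 1$ uniformly as $c_{0}'\to 0^{+}$ and eventually exceeds $e^{-1}\ge\delta$; (iii) note that passing from $c_{0}$ to the smaller $c_{0}'$ only decreases the threshold and hence increases the probability, so the conclusion of step (i) survives the shrinkage. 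With these points made explicit the proof is complete.
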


\begin{thm} \label{thm3.3}
Let $r>0$. Then for $N> \max\{e^4, r+1\}$, we have
\begin{equation}\label{3.2}
\mathbb{E}\frac{1}{\|T_{\a}\|^{r}_{\infty}}\ll N^{-r/2}(\ln
N)^{-r/2}.
\end{equation}
\end{thm}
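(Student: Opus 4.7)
\medskip

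\noindent\textbf{Proof plan for Theorem \ref{thm3.3}.}
My plan is to reduce the sup-norm to a discrete maximum over equispaced points, which converts the problem into a question about the negative moments of the maximum of $N$ i.i.d.\ standard Gaussians; then bound those negative moments using the tail estimate in Lemma \ref{lem3.3}.

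The first step is to recycle the computation in the proof of Theorem \ref{thm3.1}: setting $x_k = 2\pi k/N$ and $X_k = T_{\a}(x_k)/\sqrt N$ for $1\le k\le N$, the Dirichlet-kernel calculation there gives $\Bbb E(X_iX_j)=\delta_{ij}$, so that $(X_1,\dots,X_N)\sim N(0,I_N)$ and in particular $X_1,\dots,X_N$ are i.i.d.\ $N(0,1)$. Since $\|T_{\a}\|_\infty\ge \max_{1\le k\le N}|T_{\a}(x_k)|=\sqrt N\, M$, where $M:=\max_{1\le k\le N}|X_k|$, I obtain
\begin{equation*}
\mathbb{E}\frac{1}{\|T_{\a}\|_\infty^r}\;\le\; N^{-r/2}\,\mathbb{E}\frac{1}{M^r},
\end{equation*}
and it suffices to prove $\mathbb{E}(1/M^r)\ll (\ln N)^{-r/2}$.

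For this, I use the layer-cake identity
\begin{equation*}
\mathbb{E}\frac{1}{M^r}\;=\;r\int_0^\infty u^{-r-1}\,\mathbb{P}(M<u)\,du,
\end{equation*}
and split the range at $A:=c_0\sqrt{(\ln N)/2}$, where $c_0$ is the constant from Lemma \ref{lem3.3}. Lemma \ref{lem3.3}, applied with $\delta=N^{-1/2}$ (permissible since $N>e^4$), yields $\mathbb{P}(|X_1|\ge A)> N^{-1/2}$, hence
\begin{equation*}
\mathbb{P}(M<A)\;=\;\bigl(1-\mathbb{P}(|X_1|\ge A)\bigr)^{N}\;\le\;\bigl(1-N^{-1/2}\bigr)^{N}\;\le\; e^{-\sqrt{N}}.
\end{equation*}

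With this, the integral splits into three natural pieces, which I now estimate. First, on $[A,\infty)$ the trivial bound $\mathbb{P}(M<u)\le 1$ gives the dominant contribution
\begin{equation*}
r\int_A^\infty u^{-r-1}\,du\;=\;A^{-r}\;\asymp\;(\ln N)^{-r/2}.
\end{equation*}
Second, on $[1,A]$ the uniform bound $\mathbb{P}(M<u)\le \mathbb{P}(M<A)\le e^{-\sqrt N}$ gives a contribution bounded by $e^{-\sqrt N}$, which is negligible. Third, on $[0,1]$ the density bound $\mathbb{P}(|X_1|<u)\le u\sqrt{2/\pi}\le u$ gives $\mathbb{P}(M<u)\le u^{N}$, so the integral is bounded by $\int_0^1 r\, u^{N-r-1}\,du = r/(N-r)\ll 1/N$, which is again $o((\ln N)^{-r/2})$ since $N>r+1$. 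Summing the three pieces yields $\mathbb{E}(1/M^r)\ll (\ln N)^{-r/2}$, and hence \eqref{3.2}.

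The main obstacle is choosing the cutoff $A$ so that it is simultaneously of the correct order $\sqrt{\ln N}$ (so that $A^{-r}$ matches the target) and small enough relative to the Lemma \ref{lem3.3} threshold that the quantity $N\,\mathbb{P}(|X_1|\ge A)$ still tends to infinity; the choice $A=c_0\sqrt{(\ln N)/2}$ corresponding to $\delta=N^{-1/2}$ balances these two requirements. The density bound near the origin, needed to kill the $u^{-r-1}$ singularity, is the other place where some care is required, but the factor $u^N$ coming from $N$-fold independence easily absorbs it once $N>r$.
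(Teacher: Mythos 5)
Your proof is correct and follows essentially the same route as the paper: discretize at the equispaced points to reduce to negative moments of the maximum of $N$ i.i.d.\ standard Gaussians, apply the layer-cake formula, and split the resulting integral at a cutoff of order $\sqrt{\ln N}$ chosen via Lemma \ref{lem3.3}. The only (immaterial) differences are your choice $\delta=N^{-1/2}$ in place of the paper's $N^{-1/4}$ and your use of a separate piece on $[0,1]$ with the bound $\mathbb{P}(M<u)\le u^{N}$ to tame the singularity, where the paper instead absorbs $s^{-r-1}$ into $(\mathbb{P}(|X_1|\le s))^{N-r-1}$ over the whole lower range.
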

\begin{proof} As in the proof of Theorem \ref{thm3.1},
we set
 $$X=(X_1,\dots, X_N),\ \ X_k=\frac1{\sqrt N}T_{\a}(x_k), \ x_k=\frac{2\pi k}{N}, \ 1\le k\le
 N.$$ It follows  that $X\sim N(0,I_{N})$. Therefore,
\begin{align*}
\mathbb{E}\frac{1}{\|T_{\a}\|^{r}_{\infty}}
&\leq\mathbb{E}\frac{1}{\max\limits_{1\leq k\leq
N}|T_{\a}(x_{k})|^{r}}=N^{-\frac{r}{2}}\,\mathbb{E}\frac{1}{\max\limits_{1\leq
k\leq N} |X_{k}|^{r}}.
\end{align*}

\noindent By the variable substitution $t=s^{-r}$, we have
\begin{align*}
\mathbb{E}\frac{1}{\max\limits_{1\leq k\leq N}|X_{k}|^{r}}
&=\int^{\infty}_{0}\mathbb{P}\Big(\frac{1}{\max\limits_{1\leq k\leq N}
|X_{k}|^{r}}\geq t\Big)dt\\
&=\int^{\infty}_{0}\mathbb{P}\Big(\max\limits_{1\leq k\leq N}
|X_{k}|\leq t^{-\frac{1}{r}}\Big)dt\\
&=\int^{\infty}_{0}(\mathbb{P}(|X_{1}|\leq t^{-\frac{1}{r}}))^{N}dt\\
&=r\int^{\infty}_{0}(\mathbb{P}(|X_{1}|\leq s))^{N}\frac{ds}{s^{r+1}}\\
&\asymp\Big(\int^{\frac{c_{0}}{2}(\ln N)^{1/2}}_{0}+\int^{\infty}_{\frac{c_{0}}{2}(\ln N)^{1/2}}\Big)(\mathbb{P}(|X_{1}|\leq s))^{N}\frac{ds}{s^{r+1}}\\
&=I+II,
\end{align*}where $c_0$ is the absolute constant given in Lemma
\ref{lem3.3}. By Lemma \ref{lem3.3} with $\delta_0=N^{-\frac1{4}}$ and
$N>e^4$, we get
\begin{equation}\label{3.2-2}\Bbb P(|X_1|\le \frac{c_{0}}{2}(\ln N)^{1/2})=1-\Bbb P(|X_1|>c_0\sqrt{\ln \delta_0^{-1}})\le 1-\delta_0=1-N^{-1/4}.\end{equation}
We note that
$$\lim\limits_{s\rightarrow0^{+}}\frac{\mathbb{P}(|X_{1}|\leq
s)}{s}=\lim\limits_{s\rightarrow0^{+}}\sqrt{\frac{2}{\pi}}
\frac{\int^{s}_{0}e^{-\frac{t^{2}}{2}}dt}{s}=\sqrt{\frac{2}{\pi}},$$
and $$\mathbb{P}(|X_{1}|\leq s)\leq1.$$ This means that
$$ \mathbb{P}(|X_{1}|\leq
s)\ll s.$$ By \eqref{3.2-2} we obtain
\begin{align*}
I&=\int^{\frac{c_{0}}{2}(\ln N)^{1/2}}_{0}(\mathbb{P}
(|X_{1}|\leq s))^{N}\frac{1}{s^{r+1}}ds\\
&\ll\int^{\frac{c_{0}}{2}(\ln N)^{1/2}}_{0}(\mathbb{P}
(|X_{1}|\leq s))^{N-r-1}ds\\
&\ll(\ln N)^{1/2}(\mathbb{P}(|X_{1}|\leq
\frac{c_{0}}{2}(\ln N)^{1/2}))^{N-r-1}\\
&\leq(\ln N)^{1/2}\big(1-N^{-1/4}\big)^{N-r-1}.
\end{align*}
Since $$\lim_{N\to\infty}(\ln
N)^{\frac{r+1}2}\big(1-N^{-1/4}\big)^{N-r-1}=\lim_{N\to\infty}(\ln
N)^{\frac{r+1}2}e^{-(N-r-1)N^{-1/4}}=0,$$we get $$(\ln
N)^{\frac{r+1}2}\big(1-N^{-1/4}\big)^{N-r-1}\ll 1.$$ It follows
that
$$I\ll (\ln N)^{1/2}\big(1-N^{-1/4}\big)^{N-r-1}\ll (\ln
N)^{-r/2}.$$ We also have
\begin{align*}
II&=\int^{\infty}_{\frac{c_{0}}{2}(\ln N)^{1/2}}
(\mathbb{P}(|X_{1}|\leq
s))^{N}\frac{1}{s^{r+1}}ds\leq\int^{\infty}_{\frac{c_{0}}{2}(\ln
N)^{1/2}}\frac{1}{s^{r+1}}ds\asymp(\ln N)^{-\frac{r}{2}}.
\end{align*}
Hence, we obtain
\begin{align*}
\mathbb{E}\frac{1}{\max\limits_{1\leq k\leq N}
|X_{k}|^{r}}&\ll(\ln N)^{-\frac{r}{2}}.
\end{align*}
This deduces that
$$
\mathbb{E}\frac{1}{\|T_{\a}\|^{r}_{\infty}}\ll
N^{-\frac{r}{2}}(\ln N)^{-\frac{r}{2}},
$$
which completes the proof of Theorem \ref{thm3.3}.
\end{proof}

\section {proof of Theorem \ref{thm1.1}}

\begin{proof} {\bf Upper bounds}.

Let $1\le p< q\le \infty$.
 By the  H\"{o}lder inequality, we have
$$\|T_{\a}\|^{2}_{2}\leq\|T_{\a}\|^{\frac{1}{2}}_{1}
\|T_{\a}\|^{\frac{3}{2}}_{3},$$
which implies that
\begin{align*}
\mathbb{E}\frac{\|T_{\a}\|_{q}}{\|T_{\a}\|_{1}}&\leq\mathbb{E}
\Big(\frac{\|T_{\a}\|_{q}\|T_{\a}\|^{3}_{3}}{\|T_{\a}\|^{4}_{2}}\Big)\\
&=\mathbb{E}\Big(\frac{\|T_{\a}\|_{q}}{\|T_{\a}\|^{2}_{2}}
\frac{\|T_{\a}\|^{3}_{3}}{\|T_{\a}\|^{2}_{2}}\Big)\\
&\leq\Big(\mathbb{E}\frac{\|T_{\a}\|^{2}_{q}}
{\|T_{\a}\|^{4}_{2}}
\Big)^{1/2}\Big(\mathbb{E}\frac{\|T_{\a}\|^{6}_{3}}
{\|T_{\a}\|^{4}_{2}}\Big)^{1/2}.
\end{align*}
By (\ref{2.3}), we deduce that
$$
\mathbb{E}\frac{\|T_{\a}\|^{2}_{q}}{\|T_{\a}\|^{4}_{2}}\asymp
N^{-2}\mathbb{E}\|T_{\a}\|^{2}_{q},
$$
and
$$\mathbb{E}\frac{\|T_{\a}\|^{6}_{3}}{\|T_{\a}\|^{4}_{2}}
\asymp N^{-2}\mathbb{E}\|T_{\a}\|^{6}_{3}.
$$
 According to  \cite[Theorem 1]{WZ} and (\ref{3.1}), we have for
 $s\ge1$,
\begin{equation}\label{4.1-0}\mathbb{E}\|T_{\a}\|_{q}^{s}\asymp \left\{
\begin{aligned}
 &N^{s/2},&\ \,1\leq q<\infty,\\
 &N^{s/2}(\ln N)^{s/2},&\ \, q=\infty.
\end{aligned}
\right.\end{equation} It follows  that
\begin{equation}\label{4.1}
\mathbb{E}\frac{\|T_{\a}\|_{q}}{\|T_{\a}\|_{p}}\leq\mathbb{E}\frac{\|T_{\a}\|_{q}}{\|T_{\a}\|_{1}}\ll\left\{
\begin{aligned}
 &1,&\ \,1\leq p\leq q<\infty,\\
 &(\ln N)^{1/2},&\ \,1\leq p<\infty, q=\infty.
\end{aligned}
\right.
\end{equation}

Let $1\leq q< p\leq\infty$.  For $p<\infty$,  we have
$$\mathbb{E}\frac{\|T_{\a}\|_{q}}{\|T_{\a}\|_{p}}\le1.$$
For $p=\infty$, by the Cauchy inequality we have
\begin{align*}
\mathbb{E}\frac{\|T_{\a}\|_{q}}{\|T_{\a}\|_{\infty}}&
=\mathbb{E}\Big(\frac{\|T_{\a}\|_{q}}{\|T_{\a}\|_{2}}\frac{\|T_{\a}\|_{2}}{\|T_{\a}\|_{\infty}}\Big)
\leq\Big(\mathbb{E}\frac{\|T_{\a}\|^{2}_{q}}{\|T_{\a}\|^{2}_{2}}\Big)^{1/2}
\Big(\mathbb{E}\frac{\|T_{\a}\|^{2}_{2}}{\|T_{\a}\|^{2}_{\infty}}\Big)^{1/2}.
\end{align*}
It follows from (\ref{2.4}) and (\ref{3.2}) that
\begin{equation}\label{4.5}
\mathbb{E}\frac{\|T_{\a}\|^{2}_{2}}{\|T_{\a}\|^{2}_{\infty}}\asymp
N \, \mathbb{E}\frac{1}{\|T_{\a}\|^{2}_{\infty}}\ll (\ln N)^{-1}.
\end{equation}
By (\ref{2.3}) and \eqref{4.1-0} we obtain
\begin{equation*}
\mathbb{E}\frac{\|T_{\a}\|^{2}_{q}}{\|T_{\a}\|^{2}_{2}}\asymp
N^{-1}\,\mathbb{E}\|T_{\a}\|^{2}_{q}\asymp1,
\end{equation*}
which, combining with (\ref{4.5}),  show that
$$\mathbb{E}\frac{\|T_{\a}\|_{q}}{\|T_{\a}\|_{\infty}}\ll
\frac{1}{\sqrt{\ln N}}.$$

Hence, we obtain the upper bounds of
$N^{\rm{ave}}_{p,q}(\mathcal{T}_{n})$ for $1\le p,q\le \infty$ as
follows.
\begin{equation}\label{4.6}
N^{\rm{ave}}_{p,q}(\mathcal{T}_{n})\ll \left\{
\begin{aligned}
 &1,&\ \,1\leq p,q<\infty\ or\ p=q=\infty,\\
 &(\ln N)^{1/2},&\ \,1\leq p<\infty,q=\infty,\\
 &(\ln N)^{-1/2},&\ \,1\leq q<\infty,p=\infty.
\end{aligned}
\right.
\end{equation}

{\bf Lower bounds}.

By the Cauchy inequality, we have
\begin{align*}
1=\mathbb{E}(1)&=\mathbb{E}\sqrt{\frac{\|T_{\a}\|_{q}}{\|T_{\a}\|_{p}}
\frac{\|T_{\a}\|_{p}}{\|T_{\a}\|_{q}}}\leq\Big(\mathbb{E}\frac{\|T_{\a}\|_{q}}{\|T_{\a}\|_{p}}\Big)^{1/2}
\Big(\mathbb{E}\frac{\|T_{\a}\|_{p}}{\|T_{\a}\|_{q}}\Big)^{1/2},
\end{align*}
which deduces that
\begin{equation}\label{4.7}
\mathbb{E}\frac{\|T_{\a}\|_{q}}{\|T_{\a}\|_{p}}\geq
\frac{1}{\mathbb{E}\frac{\|T_{\a}\|_{p}}{\|T_{\a}\|_{q}}}.
\end{equation} By \eqref{4.6} and \eqref{4.7} we get
$$
N^{\rm{ave}}_{p,q}(\mathcal{T}_{n})\gg \left\{
\begin{aligned}
 &1,&\ \,1\leq p,q<\infty\ or\ p=q=\infty,\\
 &(\ln N)^{1/2},&\ \,1\leq p<\infty,q=\infty,\\
 &(\ln N)^{-1/2},&\ \,1\leq q<\infty,p=\infty,
\end{aligned}
\right.
$$
which gives the lower bounds of
$N^{\rm{ave}}_{p,q}(\mathcal{T}_{n})$ for $1\le p,q\le \infty$.

Theorem \ref{thm1.1} is proved.\end{proof}

\begin{rem} In the proof of Theorem \ref{thm1.1} with $q=\infty$ or $p=\infty$, it suffices to give the upper bounds
 of $\mathbb{E} \big(\|T_{\a}\|^{r}_{\infty}\big)$ and $\mathbb{E}\big(\frac{1}{\|T_{\a}\|^{r}_{\infty}}\big)$.  \end{rem}

\section{Generalization to $\Bbb T^d, \ d>1$}

In this section, we  shall extend the  results of the average
Nikolskii factors for random trigonometric polynomials on
1-dimensional torus $\Bbb T$ to the ones on the $d$-dimensional
torus $\Bbb T^d=[0,2\pi]^d$.

Let $L_p(\Bbb T^{d})\ (1\le p<\infty)$ be the usual Lebesgue space
consisting of measurable functions $f$ with finite norm
$$\|f\|_p=\Big(\frac1{(2\pi)^{d}}\int_{\Bbb T^{d}}|f(\x)|^pdx\Big)^{1/p},$$
and $L_\infty(\Bbb T^{d})$ be the space $C(\Bbb T^d)$ of
continuous functions on $\Bbb T^{d}$ with norm
$\|f\|_\infty=\max\limits_{\x\in \Bbb T^d}|f(\x)|$.

Let $\mathcal{T}_{n}^d$ be the  space of all multivariate
trigonometric polynomials $T_\a$  of  form
\begin{equation}\label{5.1}T_{\a}(\x)=\sum_{\|\kk\|_\infty\le n}a_\kk e^{i\kk\x}, \end{equation}where  $$\kk=(k_1,\dots,k_d)\in \Bbb Z^d, \ \
\|\kk\|_\infty=\max_{1\le i\le d}|k_i|\le n, \ \
\a=(a_{\kk})\in\Bbb R^N, \ \ N=\dim \mathcal T_n^d=(2n+1)^d.$$ The
classical Nikolskii inequality \cite[Theorem 3.4]{T} states that
there is a constant $C>0$ independent of $d$ such that for any
trigonometric polynomial $T_{\a}$ of degree $\le n$,
\begin{equation}\label{5.2}
\|T_{\a}\|_{q}\leq (C^dN)^{(1/p-1/q)_+}\|T_{\a}\|_{p},
\end{equation} for any $1\leq p, q\leq\infty$. The order $(1/p-1/q)_+$ in the
above inequality \eqref{5.2} is sharp. This means that
\begin{equation}\label{5.2-0}
N_{p,q}^{\rm wor}(\mathcal T_n^d)\asymp n^{d(1/p-1/q)_+},
\end{equation}i.e.,
the worst case  Nikolskii factors $N_{p,q}^{\rm wor}(\mathcal
T_n^d)$ is order  the $n^{d(1/p-1/q)_+}$.

Next we consider the average Nikolskii factors $N_{p,q}^{\rm
ave}(\mathcal T_N^d)$. In this section, we always suppose that
$$T_{\a}(\x)=\sum_{\|\kk\|_\infty\le n}a_\kk e^{i\kk\x}$$  is  a
random multivariate trigonometric polynomial, where $\a\sim
N(0,I_N)$. We remark that $$\{e^{i\kk\x}\}_{\|\kk\|_\infty\le n}$$
is an orthonormal basis for $\mathcal T_n^d$.

\begin{thm}\label{thm5.1}
Let $p=2, \,1\leq q\leq\infty$. We have
\begin{equation}\label{5.3}N^{\rm{ave}}_{2,q}(\mathcal{T}^{d}_{n})=
\mathbb{E}\Big(\frac{\|T_{\a}\|_{q}}{\|T_{\a}\|_{2}}\Big)\asymp
N^{-1/2}\Bbb E\|T_{\a}\|_q\asymp\left\{
\begin{aligned}
&1,&\ \,1\leq q<\infty,\\
&\sqrt{\ln N},&\ \,q=\infty,
\end{aligned}
\right.\end{equation} where $N=(2n+1)^{d}$, and the equivalent
constants are independent of $N$ and $d$.
\end{thm}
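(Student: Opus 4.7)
The plan is to adapt the 1-dimensional proof of Theorem \ref{thm3.1} to $\mathbb{T}^d$, relying on the fact that the complex exponential orthonormal basis $\{e^{i\kk\x}\}_{\|\kk\|_\infty\le n}$ satisfies the constant-sum identity
$$m(\x)=\sum_{\|\kk\|_\infty\le n}|e^{i\kk\x}|^2=N,$$
exactly as in dimension one. This identity will feed both \eqref{2.5} (valid for any orthonormal basis via Parseval) and Corollary~\ref{Cor2.4} (via Theorem~\ref{thm2.3}) to yield the right-hand equivalences; the only substantial new work lies in keeping the constants independent of $d$ and, when $q=\infty$, producing sharp two-sided bounds on $\mathbb{E}\|T_\a\|_\infty$.

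For $1\le q<\infty$, Corollary~\ref{Cor2.4} with $s=1$ immediately gives
$$\mathbb{E}\|T_\a\|_q\asymp\|m^{1/2}\|_q=N^{1/2},$$
so \eqref{2.5} yields $N^{\rm ave}_{2,q}(\mathcal T_n^d)\asymp 1$ with constants independent of $d$. For the upper bound at $q=\infty$, I would proceed as in Theorem~\ref{thm3.1}: the multivariate Nikolskii inequality \eqref{5.2} together with H\"older's inequality and Theorem~\ref{thm2.3} give, for any $2<q<\infty$,
$$\mathbb{E}\|T_\a\|_\infty\le (C^dN)^{1/q}(\mathbb{E}\|T_\a\|_q^q)^{1/q}=(C^dN)^{1/q}\,C(q)\,N^{1/2}.$$
Choosing $q=\ln N=d\ln(2n+1)$ makes $N^{1/q}=e$ and $C^{d/q}=C^{1/\ln(2n+1)}$, both bounded by absolute constants once $n\ge 1$; combined with $C(q)\ll q^{1/2}$, this yields $\mathbb{E}\|T_\a\|_\infty\ll(N\ln N)^{1/2}$, whence $N^{\rm ave}_{2,\infty}(\mathcal T_n^d)\ll\sqrt{\ln N}$ through \eqref{2.5}.

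For the matching lower bound, I would sample $T_\a$ at the $N$ lattice points $\x_\j=\tfrac{2\pi\j}{2n+1}$, $\j\in\{0,1,\dots,2n\}^d$. The discrete orthogonality of the exponentials on this lattice gives
$$\mathbb{E}\bigl(T_\a(\x_\j)\overline{T_\a(\x_{\j'})}\bigr)=N\delta_{\j,\j'},\qquad \mathbb{E}\bigl(T_\a(\x_\j)T_\a(\x_{\j'})\bigr)=N\,\mathbf{1}_{\j+\j'\equiv\mathbf 0\pmod{2n+1}}.$$
Let $S$ be a set of size $|S|\asymp N$ containing exactly one representative from each antipodal pair $\{\j,-\j\pmod{2n+1}\}$ with $\j\ne\mathbf 0$. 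On $S$ both covariances vanish off-diagonally, so for $\j\in S$ the real and imaginary parts of $T_\a(\x_\j)$ are independent centered Gaussians of variance $N/2$; consequently $|T_\a(\x_\j)|$ is Rayleigh with tail $\mathbb{P}(|T_\a(\x_\j)|>t)=e^{-t^2/N}$, and different $\j\in S$ give independent Rayleigh variables. Standard extreme-value calculations for $|S|\asymp N$ i.i.d.\ Rayleighs then produce $\mathbb{E}\max_{\j\in S}|T_\a(\x_\j)|\gg\sqrt{N\ln N}$, and the bound $\|T_\a\|_\infty\ge\max_\j|T_\a(\x_\j)|$ together with \eqref{2.5} closes $N^{\rm ave}_{2,\infty}(\mathcal T_n^d)\gg\sqrt{\ln N}$.

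The principal obstacle is the $q=\infty$ lower bound, with two wrinkles peculiar to the multivariate complex-exponential setting. First, the sampled values are complex Gaussians whose pseudo-covariance $\mathbb{E}(T_\a(\x_\j)T_\a(\x_{\j'}))$ does not vanish when $\j'\equiv-\j\pmod{2n+1}$ (since $T_\a(\x_{-\j})=\overline{T_\a(\x_\j)}$), so the $N$ sampled values are not jointly independent and one must restrict to the fundamental domain $S$ and verify that on $S$ the real and imaginary parts have equal variance and are jointly uncorrelated. Second, one must prevent the $d$-dependent constant $C^d$ in the multivariate Nikolskii inequality from spoiling the upper bound; the choice $q=\ln N$ (rather than a looser choice) is essential, as it forces $C^{d/q}$ to depend only on $\ln(2n+1)$ and hence be bounded uniformly in $d$.
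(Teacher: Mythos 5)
Your proposal follows the paper's proof almost verbatim for the case $1\le q<\infty$ (constant Christoffel function $m(\x)=N$, Corollary~\ref{Cor2.4}, and \eqref{2.5}) and for the upper bound at $q=\infty$ (Nikolskii inequality \eqref{5.2}, H\"older, Theorem~\ref{thm2.3}, the choice $q=\ln N$ to tame both $N^{1/q}$ and $C^{d/q}$), so those parts are correct and identical in approach. The one place you genuinely diverge is the lower bound at $q=\infty$, and there your version is actually \emph{more} careful than the paper's. The paper samples $T_{\a}$ at the full lattice $\{\x_{\kk}\}$, verifies only $\Bbb E(X_{\kk}\overline{X_{\bf l}})=\delta_{\kk,\bf l}$ via the Dirichlet-kernel identity, and then asserts that the $N$ sampled values are i.i.d.\ real $N(0,1)$ variables; since the $X_{\kk}$ are complex-valued (real coefficients against complex exponentials) and satisfy $T_{\a}(\x_{-\j})=\overline{T_{\a}(\x_{\j})}$, that assertion glosses over exactly the pseudo-covariance issue you identify. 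Your remedy --- restricting to a fundamental domain $S$ of the antipodal pairing, checking that both $\Bbb E(X_{\j}\overline{X_{\j'}})$ and $\Bbb E(X_{\j}X_{\j'})$ vanish off-diagonally there (the latter again by the Dirichlet kernel, using that $2n+1$ is odd so $\j+\j'\equiv\mathbf 0$ forces $\j'=-\j$), and then running the extreme-value bound over the $|S|\asymp N$ independent Rayleigh moduli (or simply over their real parts, which are i.i.d.\ $N(0,N/2)$) --- yields the same conclusion $\Bbb E\|T_{\a}\|_{\infty}\gg\sqrt{N\ln N}$ and is the rigorous way to justify the step the paper takes for granted. In short: same architecture, but your treatment of the complex Gaussian structure on the sampling lattice repairs a gap in the paper's own argument rather than introducing one.
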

\begin{proof} We note that $\{e^{i\kk\x}\}_{\|\kk\|_\infty\leq n}$ is an
orthonormal basis for $\mathcal T^d_n$, and $$m(\x):=\sum_{\|\kk\|_\infty\le n}|e^{i\kk\x}|^{2}=(2n+1)^{d}=N.$$ For $1\leq s,q<\infty$,
\eqref{2.10} and (\ref{2.5}) give
$$\mathbb{E}\|T_{\a}\|^{s}_{q}\asymp \|m(\x)^{1/2}\|_q^s\asymp N^{s/2},$$
and $$ N^{\rm{ave}}_{2,q}(\mathcal{T}^{d}_{n})\asymp N^{-1/2}\Bbb
E\|T_{\a}\|_q\asymp 1.
$$

For $q=\infty$, by the Nikolskii inequality (\ref{5.2}), the
H\"{o}lder inequality,  and Theorem \ref{thm2.3}, we have for
$2<q<\infty$,
\begin{align*}
\mathbb{E}\|T_{\a}\|_{\infty}&\ll C^{d/q}N^{1/q}\mathbb{E}\|T_{\a}\|_{q}\leq C^{d/q}N^{1/q}(\mathbb{E}\|T_{\a}\|^{q}_{q})^{1/q} \\
&=C^{d/q}N^{1/q+1/2}C(q)\ll C^{d/q}N^{1/2+1/q}q^{1/2}.
\end{align*}
We remark that the  constants in the above inequalities are
independent of $N, d,$ and $q$. Thus, taking $q=\ln N=d\ln(2n+1)$,
we get
$$\mathbb{E}\|T_{\a}\|_{\infty}\ll (N\ln N)^{1/2}.$$ By
(\ref{2.5}) we obtain
\begin{equation}N^{\rm{ave}}_{2,\infty}(\mathcal{T}_{n})\ll\sqrt{\ln
N},\label{5.1-0}\end{equation}where the  constant in the above
inequality \eqref{5.1-0} is independent of $N$ and $d$.

Now we show the lower bound  of
 $N^{\rm{ave}}_{2,\infty}(\mathcal{T}^{d}_{n})$. We set
 $$X=(X_\kk),\ \ X_\kk=\frac1{\sqrt N}T_{\a}(\x_\kk),$$ and
 $$ \x_\kk=(x_{k_{1}},\ldots,x_{k_{d}}),\ x_{k_{i}}=\frac{2\pi k_i}{2n+1}, \ \ k_i=0, 1,\dots, 2n,\ 1\le i\le d,$$
 where $T_{\a}$ is the random trigonometric polynomial given by \eqref{5.1},
 $\a\sim N(0,I_N)$. Then $X$ is the Gaussian centered
 random vector with covariance matrix $C=(C_{\kk,\bf{l}})$,
 where $C_{\kk,\bf{l}}=\Bbb E(X_\kk \overline{X_{\bf{l}}})$. We have
\begin{align*}\Bbb E(X_\kk \overline{X_{\bf{l}}})&=\frac1N\Bbb E(T_{\a}(x_\kk)\overline{T_{\a}(x_{\bf{l}})})=\frac{1}{N}\sum_{\|\bf{s}\|_\infty\le n}e^{i\bf{s}(x_\kk-\x_{\bf{l}})}\\
&=\frac{1}{N}\prod^{d}_{i=1}\sum_{|s_{i}|\le n}e^{is_{i}
(x_{k_{i}}-x_{l_{i}})}\\
&=\frac
1N\prod^{d}_{i=1}D_n(x_{k_i}-x_{l_i})=\delta_{\kk,\bf{l}}.\end{align*}where
$D_n$ is the Dirichlet kernel,
$$D_n(x)=\sum_{|k|\le n}e^{ikx}=1+2\sum_{k=1}^n\cos kx= \frac{\sin(n+\frac12)x}{\sin \frac
x2}.$$ Hence, $X=(X_{\kk}):=(\widetilde{X_{i}})^{N}_{i=1}\sim N(0,I_N)$. This means that $\widetilde{X_1},\dots, \widetilde{X_N}$
are the i.i.d. random variables with $\widetilde{X_i}\sim N(0,1)$. It follows
from \cite{P, V} that
$$\Bbb E \|T_{\a}\|_\infty\ge \sqrt N \ \Bbb E\max_{1\le k\le
N}|\widetilde{X_k}|\asymp \sqrt N\, \sqrt{\ln N}.
$$
By \eqref{2.5} and \eqref{5.1-0} we obtain
$$N^{\rm{ave}}_{2,\infty}(\mathcal{T}^{d}_{n})\asymp \sqrt{\ln
N}.$$Theorem \ref{thm5.1} is proved. \end{proof}

\begin{thm} \label{thm5.2}
Let $r>0$. Then for $N> \max\{e^4, r+1\}$, we have
\begin{equation}\label{5.5}
\mathbb{E}\frac{1}{\|T_{\a}\|^{r}_{\infty}}\ll N^{-r/2}(\ln
N)^{-r/2},
\end{equation}
where $N=(2n+1)^{d}$, and
constant in the above inequality is independent of $N$ and $d$.
\end{thm}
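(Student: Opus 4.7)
The plan is to follow the proof of Theorem \ref{thm3.3} essentially verbatim, exploiting the fact that the dimensional parameter $d$ enters the problem only through the total dimension $N=(2n+1)^d$ and then disappears once we reduce to a calculation involving i.i.d.\ standard Gaussians.

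First, I would reuse the sampling construction from the proof of Theorem \ref{thm5.1}: set $X_{\kk}=\frac{1}{\sqrt N}T_{\a}(\x_{\kk})$ for $\kk\in\{0,1,\dots,2n\}^d$ with $\x_{\kk}=(2\pi k_1/(2n+1),\dots,2\pi k_d/(2n+1))$. The factorized Dirichlet-kernel computation already carried out there shows that the covariance matrix is $I_N$, so after relabeling the $N$ indices we obtain i.i.d.\ standard Gaussians $X_1,\dots,X_N$. Consequently
$$\mathbb{E}\frac{1}{\|T_{\a}\|_\infty^{r}}\leq \mathbb{E}\frac{1}{\max_{\kk}|T_{\a}(\x_{\kk})|^{r}}=N^{-r/2}\,\mathbb{E}\frac{1}{\max_{1\le k\le N}|X_k|^{r}}.$$

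Second, I would apply the layer-cake formula together with the substitution $t=s^{-r}$ to write
$$\mathbb{E}\frac{1}{\max_{1\le k\le N}|X_k|^{r}}=r\int_0^\infty\big(\mathbb{P}(|X_1|\le s)\big)^{N}\frac{ds}{s^{r+1}},$$
and split the integral at $s_0=\frac{c_0}{2}\sqrt{\ln N}$ exactly as in Theorem \ref{thm3.3}. The lower piece is controlled using $\mathbb{P}(|X_1|\le s)\ll s$ together with Lemma \ref{lem3.3} (applied with $\delta_0=N^{-1/4}$, which requires $N>e^4$) to obtain $\mathbb{P}(|X_1|\le s_0)\le 1-N^{-1/4}$, yielding a contribution of order $(\ln N)^{1/2}(1-N^{-1/4})^{N-r-1}\ll (\ln N)^{-r/2}$; the upper piece is bounded directly by $\int_{s_0}^{\infty}s^{-r-1}\,ds\asymp (\ln N)^{-r/2}$.

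The work is mostly bookkeeping rather than a new idea. The main point to keep track of is that every implicit constant arising in the tail estimates depends only on $r$ (and on the absolute constant $c_0$ from Lemma \ref{lem3.3}), and not on $d$; this is automatic once the problem has been reduced to i.i.d.\ standard normals, because the index set for the maximum only matters through its cardinality $N$. Combining the two pieces and multiplying by the factor $N^{-r/2}$ yields $\mathbb{E}\|T_{\a}\|_\infty^{-r}\ll N^{-r/2}(\ln N)^{-r/2}$ with a constant independent of both $N$ and $d$, which is the claim.
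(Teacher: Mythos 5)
Your proposal is correct and follows essentially the same route as the paper: the paper's proof of Theorem \ref{thm5.2} likewise uses the grid-sampling construction from Theorem \ref{thm5.1} to reduce to i.i.d.\ standard Gaussians and then simply invokes the integral estimate already established in the proof of Theorem \ref{thm3.3}, with the dimension-independence of the constants following exactly as you observe, because $d$ enters only through the cardinality $N$.
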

\begin{proof}
As in the proof of Theorem \ref{thm5.1},
we set
$$X=(X_\kk),\ \ X_\kk=\frac1{\sqrt N}T_{\a}(\x_\kk),$$
and $$ \x_\kk=(x_{k_{1}},\ldots,x_{k_{d}}),\ x_{k_{i}}=\frac{2\pi k_i}{2n+1}, \ \ k_i=0, 1,\dots, 2n,\ 1\le i\le d.$$
It follows  that $X=(X_{\kk})=(\widetilde{X_{i}})^{N}_{i=1}\sim N(0,I_{N})$. Therefore,
$$\mathbb{E}\frac{1}{\|T_{\a}\|^{r}_{\infty}}
\leq N^{-\frac{r}{2}}\,\mathbb{E}\frac{1}{\max\limits_{1\leq
k\leq N} |\widetilde{X_{k}}|^{r}}.$$
From the proof of Theorem \ref{3.2}, we know
\begin{align*}
\mathbb{E}\frac{1}{\max\limits_{1\leq k\leq N}
|\widetilde{X_{k}}|^{r}}&\ll(\ln N)^{-\frac{r}{2}}.
\end{align*}
This deduces that
$$
\mathbb{E}\frac{1}{\|T_{\a}\|^{r}_{\infty}}\ll
N^{-\frac{r}{2}}(\ln N)^{-\frac{r}{2}},
$$
which completes the proof of Theorem \ref{thm5.2}.
\end{proof}

Finally, we get the estimates of the average
Nikolskii factors for random trigonometric polynomials on the $d$-dimensional
torus $\Bbb T^d=[0,2\pi]^d$.
\begin{thm}\label{thm5.3}
Let  $1\leq p,q\leq\infty$, $N=(2n+1)^{d}$. Then we have
\begin{equation}\label{5.6}
N^{\rm ave}_{p,q}(\mathcal{T}^{d}_{n})=\mathbb{E}\Big(\frac{\|T_{\a}\|_{q}}{\|T_{\a}\|_{p}}\Big)\asymp\left\{
\begin{aligned}
 &1,&\ \,1\leq p,q<\infty\ or\ p=q=\infty,\\
 &(\ln N)^{1/2},&\ \,1\leq p<\infty,q=\infty,\\
 &(\ln N)^{-1/2},&\ \,1\leq q<\infty,p=\infty,
\end{aligned}
\right.
\end{equation}
where the equivalent
constants are independent of $N$ and $d$.
\end{thm}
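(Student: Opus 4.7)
The plan is to imitate the proof of Theorem \ref{thm1.1} line by line, with the one-dimensional inputs (Theorems \ref{thm3.1} and \ref{thm3.3}) replaced by their multivariate analogues (Theorems \ref{thm5.1} and \ref{thm5.2}). The general-formulation machinery in Section 2, namely the identities \eqref{2.3}--\eqref{2.5} from Theorem \ref{thm2.2}, Theorem \ref{thm2.3} and Corollary \ref{Cor2.4}, are stated for an arbitrary $N$-dimensional subspace with an orthonormal basis, so they apply directly to $\mathcal T_n^d$ with $N=(2n+1)^d$. Moreover, since each of the multivariate-specific steps (Theorems \ref{thm5.1} and \ref{thm5.2}) was already stated with constants independent of $N$ and $d$, the same $d$-independence is inherited by the final bounds.

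\textbf{Upper bounds.} For $1\le p<q\le\infty$, I would start from the elementary Hölder bound $\|T_\a\|_2^{2}\le \|T_\a\|_1^{1/2}\|T_\a\|_3^{3/2}$, which dominates $\|T_\a\|_q/\|T_\a\|_p$ by $\|T_\a\|_q\|T_\a\|_3^{3}/\|T_\a\|_2^{4}$, and then apply Cauchy--Schwarz to split the expectation into
\[
\Bigl(\mathbb{E}\frac{\|T_\a\|_q^{2}}{\|T_\a\|_2^{4}}\Bigr)^{1/2}\Bigl(\mathbb{E}\frac{\|T_\a\|_3^{6}}{\|T_\a\|_2^{4}}\Bigr)^{1/2}.
\]
Formula \eqref{2.3} converts each factor to $N^{-2}\mathbb{E}\|T_\a\|_q^{2}$ and $N^{-2}\mathbb{E}\|T_\a\|_3^{6}$ respectively; Theorem \ref{thm5.1} (together with \cite[Theorem 1]{WZ} to raise the power) then gives $\mathbb{E}\|T_\a\|_q^{s}\asymp N^{s/2}$ for $q<\infty$ and $\mathbb{E}\|T_\a\|_\infty^{s}\asymp N^{s/2}(\ln N)^{s/2}$, producing the desired upper bounds $1$ and $(\ln N)^{1/2}$. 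For $1\le q<p<\infty$ the ratio is trivially $\le 1$. For $1\le q<p=\infty$, I would use Cauchy--Schwarz through the $L^2$ norm,
\[
\mathbb{E}\frac{\|T_\a\|_q}{\|T_\a\|_\infty}\le \Bigl(\mathbb{E}\frac{\|T_\a\|_q^{2}}{\|T_\a\|_2^{2}}\Bigr)^{1/2}\Bigl(\mathbb{E}\frac{\|T_\a\|_2^{2}}{\|T_\a\|_\infty^{2}}\Bigr)^{1/2},
\]
bound the first factor via \eqref{2.3} and Theorem \ref{thm5.1} by $1$, and the second by combining \eqref{2.4} with Theorem \ref{thm5.2} (with $r=2$) to obtain $(\ln N)^{-1}$, giving the bound $(\ln N)^{-1/2}$.

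\textbf{Lower bounds.} These come for free from the Cauchy inequality
\[
1=\mathbb{E}(1)\le \Bigl(\mathbb{E}\frac{\|T_\a\|_q}{\|T_\a\|_p}\Bigr)^{1/2}\Bigl(\mathbb{E}\frac{\|T_\a\|_p}{\|T_\a\|_q}\Bigr)^{1/2},
\]
so $N^{\rm ave}_{p,q}(\mathcal T_n^d)\ge 1/N^{\rm ave}_{q,p}(\mathcal T_n^d)$. Feeding the upper bounds proved in the previous paragraph into the reciprocal position produces the matching lower bounds in all three cases.

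\textbf{Main obstacle.} Most of the work is routine once Theorems \ref{thm5.1} and \ref{thm5.2} are in hand. The only subtle point is to keep the constants independent of $d$. This was already arranged at the two places where $d$ enters nontrivially: in the derivation of the $L^\infty$ upper bound in Theorem \ref{thm5.1}, the Nikolskii constant $C^d$ is absorbed by choosing $q=\ln N=d\ln(2n+1)$ so that $C^{d/q}$ is bounded, and in Theorem \ref{thm5.2} the Gaussian large-deviation argument on the equispaced grid $\{\x_{\kk}\}$ only uses that the $(2n+1)^d$ samples are i.i.d.\ $N(0,1)$, a fact that follows from the $d$-fold Dirichlet-kernel factorization with no $d$-dependent constants. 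So verifying $d$-independence reduces to checking that every constant in the proof of Theorem \ref{thm1.1} was already traced in this way; this is the step that requires the most care, but not any new ideas.
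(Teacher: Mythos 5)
Your proposal is correct and follows essentially the same route as the paper: the paper's proof of Theorem \ref{thm5.3} is precisely a citation of the argument for Theorem \ref{thm1.1} with Theorems \ref{thm3.1} and \ref{thm3.3} replaced by Theorems \ref{thm5.1} and \ref{thm5.2}, together with the lower-bound inequality \eqref{4.7}. Your additional remarks on tracing the $d$-independence of the constants are consistent with (and slightly more explicit than) what the paper records.
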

\begin{proof}
As in the proof of Theorem \ref{thm1.1} in Section 4, by the
H\"{o}lder inequality, Theorems \ref{thm2.2}, \ref{thm5.1} and
 \ref{thm5.2}, we obtain the upper bounds of
$N^{\rm{ave}}_{p,q}(\mathcal{T}^{d}_{n})$ for $1\le p,q\le \infty$
as follows.
\begin{equation}\label{5.7}
N^{\rm{ave}}_{p,q}(\mathcal{T}^{d}_{n})\ll \left\{
\begin{aligned}
 &1,&\ \,1\leq p,q<\infty\ or\ p=q=\infty,\\
 &(\ln N)^{1/2},&\ \,1\leq p<\infty,q=\infty,\\
 &(\ln N)^{-1/2},&\ \,1\leq q<\infty,p=\infty.
\end{aligned}
\right.
\end{equation}
By \eqref{4.7} and \eqref{5.7} we get
$$
N^{\rm{ave}}_{p,q}(\mathcal{T}^{d}_{n})\gg \left\{
\begin{aligned}
 &1,&\ \,1\leq p,q<\infty\ or\ p=q=\infty,\\
 &(\ln N)^{1/2},&\ \,1\leq p<\infty,q=\infty,\\
 &(\ln N)^{-1/2},&\ \,1\leq q<\infty,p=\infty,
\end{aligned}
\right.
$$
which gives the lower bounds of
$N^{\rm{ave}}_{p,q}(\mathcal{T}^{d}_{n})$ for $1\le p,q\le \infty$.

Theorem \ref{thm5.3} is proved.
\end{proof}
\begin{rem}
From \eqref{5.2-0} and \eqref{5.6} we know that the order of
the worst and average  Nikolskii factors for $1\leq q\leq
p<\infty$ or $p=q=\infty$ are same, which both are equivalent to
the constant $1$. Whereas, for other cases, the orders of average
Nikolskii factors are significantly smaller than the worst case
Nikolskii factors. More precisely, for $1\leq p<q<\infty$, the
worst case Nikolskii factor is order $N^{1/p-1/q}$,
while the average case Nikolskii factors is order 1;
for $1\leq p<q=\infty$, the worst case Nikolskii factor is order $N^{1/p}$, while the order of the average Nikolskii
factor is  $(\ln N)^{1/2}$; for $1\le q<p=\infty$, the worst case
Nikolskii factor is order 1, while the order of the
average Nikolskii factor is $(\ln N)^{-1/2}$.

This indicates that for $1\le p<q\le \infty$ or $1\le q<p=\infty$,
the average Nikolskii factor $N^{\rm ave}_{p,q}(\mathcal{T}^{d}_{n})$
 is significantly smaller than the worst Nikolskii factor
$N^{\rm wor}_{p,q}(\mathcal{T}^{d}_{n})$.
\end{rem}
\begin{rem}It is interesting to note that  the equivalence constants in \eqref{5.6} are independent of the dimension $d$.\end{rem}

\noindent\textbf{Acknowledgments}
  The authors  were
supported by the National Natural Science Foundation of China
(Project no. 12371098).

\end{document}